\newtheorem{thm}{Theorem}[section]
\newtheorem{lemma}[thm]{Lemma}
\newtheorem{prop}[thm]{Proposition}
\newtheorem{coro}[thm]{Corollary}
\newtheorem{remark}[thm]{Remark}
\numberwithin{equation}{section}
\newcommand{\pr}{\partial}
\newcommand{\veps}{\varepsilon}
\newcommand{\dd}[2]{\dfrac{\partial #1}{\partial #2}}
\newcommand{\definedas}{\mathrel{\raise.095ex\hbox{\rm :}\mkern-5.2mu=}}
\newcommand{\asdefined}{\mathrel{=\mkern-5.2mu}\raise.095ex\hbox{\rm :}\;}
\def\tr{\textmd{tr}}
\def\M{\mathscr{M}}
\def\Lap{\Delta} 
\def\grad{\nabla}
\def\Ric{\textmd{Ric}}
\def\dint{\displaystyle\int}
\def\R{\mathbb{R}}
\def\R{\mathbb{R}}
\def\S{\Sigma}
\def\s{\sigma}
\def\({\left(}
\def\){\right)}
\def\a{\alpha}
\def\b{\beta}
\def\={\stackrel{(n=2)}{=}}
\def\bS{\mathbb{S}}
\def\k{\kappa}
\newcommand{\mhh}{\mathfrak{m}^{AH}_H}
\newcommand{\m}{\mathfrak{m}}
\newcommand{\umb}{u_{m,b}}
\newcommand{\ddt}[2]{\dfrac{d #1}{d #2}}
\begin{document}
	
		\title[Asymptotically hyperbolic extensions]{Asymptotically hyperbolic extensions and \\an analogue of the Bartnik mass}
	\author[{Cabrera Pacheco}]{Armando J. {Cabrera Pacheco}}
	\address{Department of Mathematics,  University of Connecticut, Storrs, CT 06269, USA.}
	\curraddr{Department of Mathematics, Universit\"at T\"ubingen,  72076 T\"{u}bingen, Germany.}
	\email{cabrera@math.uni-tuebingen.de}
	
	\author[Cederbaum]{Carla Cederbaum}
	\address{Department of Mathematics, Universit\"at T\"ubingen,  72076 T\"{u}bingen, Germany.}
	\email{cederbaum@math.uni-tuebingen.de}
	
	\author[McCormick]{Stephen McCormick}
	\address{Institutionen f\"{o}r Matematik, Kungliga Tekniska H\"{o}gskolan, 100 44 Stockholm, Sweden.}
	\curraddr{Matematiska institutionen, Uppsala universitet, 751 06 Uppsala, Sweden.}
	\email{stephen.mccormick@math.uu.se}

	\keywords{Quasi-local mass; asymptotically hyperbolic manifolds; bounded scalar curvature}
\begin{abstract}
The Bartnik mass is a quasi-local mass tailored to asymptotically flat Riemannian manifolds with non-negative scalar curvature. From the perspective of general relativity, these model time-symmetric domains obeying the dominant energy condition without a cosmological constant. There is a natural analogue of the Bartnik mass for asymptotically hyperbolic Riemannian manifolds with a negative lower bound on scalar curvature which model time-symmetric domains obeying the dominant energy condition in the presence of a negative cosmological constant.
				
Following the ideas of Mantoulidis and Schoen \cite{M-S}, of Miao and Xie \cite{M-X}, and of joint work of Miao and the authors \cite{CCMM}, we construct asymptotically hyperbolic extensions of minimal and constant mean curvature (CMC) Bartnik data while controlling the total mass of the extensions. We establish that for minimal surfaces satisfying a stability condition, the Bartnik mass is bounded above by the conjectured lower bound coming from the asymptotically hyperbolic Riemannian Penrose inequality. We also obtain estimates for such a hyperbolic Bartnik mass of CMC surfaces with positive Gaussian curvature.
\end{abstract}

\maketitle
\section{Introduction}  \label{SIntro}
In a recent paper of Miao and the authors \cite{CCMM}, we constructed extensions of constant mean curvature (CMC) Bartnik data with controlled ADM mass \cite{ADM} in order to estimate Bartnik's quasi-local mass \cite{Bartnik-89}. By \emph{Bartnik data}, we mean a triple $(\S,g,H)$ consisting of a metric $g$ on a surface $\S\cong \bS^2$ and a non-negative function $H$ (constant and positive in the CMC case) on $\S$. An admissible extension in the context of the Bartnik mass is then an asymptotically flat Riemannian manifold with non-negative scalar curvature whose boundary is isometric to $(\S,g)$ with induced mean curvature $H$, and with no closed minimal surfaces enclosing $\S$. The estimates in \cite{CCMM} are obtained by constructing examples of admissible extensions with controlled mass; a construction based on the ideas of Mantoulidis and Schoen \cite{M-S}, who recently proved that the Bartnik mass of stable minimal surface is exactly its Hawking mass, and on a collar construction by Miao and Xie \cite{M-X} in which the growth of the Hawking mass along its level sets is well-controlled. In this article, we give an analogue of both of these results in the asymptotically hyperbolic case.

There is a natural analogue of the Bartnik mass for asymptotically hyperbolic manifolds, which to the best of our knowledge, has received little attention in the literature to date. We would like to remark that such a quantity motivates the work of Bonini and Qing~\cite{BQ} and ongoing work of Martin \cite{M}, but as we are unaware of a precise definition in the literature, we give such a definition in Section \ref{Sdefn}. 

Here we give estimates for this asymptotically hyperbolic analogue of the Bartnik mass. The idea is to construct a ``collar" manifold with two boundary components; one realising the Bartnik data, while the other is a round sphere with controlled hyperbolic Hawking mass (see \eqref{eq-AHHdef} below). We then smoothly glue this collar to an AdS-Schwarzschild manifold with mass close to the Hawking mass of the end of the collar. 

We will give estimates in two cases: First, we discuss the case $H\equiv H_{o}=0$ (when the Bartnik data corresponds to a minimal surface). Then we provide estimates for Bartnik data when $H\equiv H_o$ is a positive constant, which we refer to as \emph{CMC Bartnik data}. The main results are indicated in Theorem \ref{main1} (minimal surfaces) and Theorem \ref{main2} (CMC Bartnik data). The precise statements are given as Theorem \ref{thm-main} and Theorems \ref{thm-extension1} and \ref{thm-extension2}, respectively. All necessary definitions will be given in Section \ref{Sdefn}.

\begin{thm}\label{main1}
Let $(\S \cong \bS^2,g_o,H_o=0)$ be Bartnik data satisfying $K(g_{o})>-3$ or satisfying $\lambda_1(-\Delta_{g_o}+K(g_o))>0$, where $\lambda_1(-\Delta_{g_o}+K(g_o))$ denotes the first eigenvalue of the operator $-\Delta_{g_o}+K(g_o)$ on $\S$ and $K(g_o)$ denotes the Gaussian curvature of $g_o$. Then its hyperbolic Bartnik mass satisfies
\begin{align*}
\m_{B}^{AH}(\S,g_o,H_{o}=0)\leq\sqrt{\frac{|\S|_{g_o}}{16\pi}}\(1+\frac{|\S|_{g_o}}{4\pi}\).
\end{align*}
\end{thm}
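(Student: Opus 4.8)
The plan is to mimic the Mantoulidis--Schoen strategy, adapted to the asymptotically hyperbolic setting with cosmological constant $-3$, by producing an explicit admissible extension whose hyperbolic Hawking mass at its asymptotic end is as close as desired to the target value $\sqrt{|\S|_{g_o}/16\pi}\,(1+|\S|_{g_o}/4\pi)$, which is precisely the conjectured Riemannian Penrose bound for a horizon of area $|\S|_{g_o}$. The extension will be built in two pieces: a \emph{collar} $[0,1]\times\S$ carrying a metric of the warped form $g_t = \phi(t)^2\,dt^2 + u(t)^2\, g(t)$, where $g(t)$ is a path of metrics on $\S$ interpolating from $g_o$ at $t=0$ to a round metric $g_* $ of the same area at $t=1$, with $u$ chosen so that the inner boundary $\{t=0\}$ is minimal (mean curvature $0$), and then an \emph{AdS--Schwarzschild} exterior of mass $m$ glued smoothly along the round outer boundary $\{t=1\}$.

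The key steps, in order, would be: (i) Recall from the quoted definition in Section~\ref{Sdefn} the hyperbolic Bartnik mass and the hyperbolic Hawking mass \eqref{eq-AHHdef}, and note that it suffices to exhibit, for every $\varepsilon>0$, a smooth admissible asymptotically hyperbolic extension with scalar curvature $\geq -6$, boundary data $(\S,g_o,0)$, no enclosing minimal surfaces, and total mass $\leq \sqrt{|\S|_{g_o}/16\pi}(1+|\S|_{g_o}/4\pi)+\varepsilon$. (ii) Following \cite{M-S}, choose a path $t\mapsto g(t)$ of unit-area metrics from $\widehat g_o \definedas g_o/(\text{area normalization})$ to the round metric, staying in the space where the relevant linearized operator is controlled; here is where the hypothesis $K(g_o)>-3$ or $\lambda_1(-\Delta_{g_o}+K(g_o))>0$ enters — it guarantees such a path exists along which a certain curvature/eigenvalue quantity keeps the right sign, so that the collar metric can be arranged to have scalar curvature bounded below by $-6$ while keeping the inner boundary minimal and nondegenerate. (iii) Compute the scalar curvature of $g_t = \phi^2 dt^2 + u^2 g(t)$ and choose $\phi$ (a large constant, or a suitable function) and the warping $u(t)$ — solving an ODE with $u(0)=\sqrt{|\S|_{g_o}/4\pi}$, $u'(0)=0$ (minimality) — so that $\Scal(g_t)\geq -6$ throughout, arranging $u$ to be monotone and the geometry at $t=1$ to be exactly a round sphere of the appropriate radius with the correct $1$-jet to match AdS--Schwarzschild. (iv) Glue on the AdS--Schwarzschild manifold: invoke the collar/gluing construction analogous to Miao--Xie \cite{M-X}, ensuring the hyperbolic Hawking mass grows in a controlled way along the collar's level sets so that at the gluing surface it is within $\varepsilon$ of its value at $\{t=1\}$, and that the AdS--Schwarzschild mass parameter can be taken within $\varepsilon$ of that value; a bend/smoothing argument (as in \cite{M-S,CCMM}) then yields a smooth metric with $\Scal\geq -6$. (v) Verify admissibility: the only closed minimal surface is the inner boundary, because $u$ is chosen strictly increasing and the mean curvature of the level sets stays positive away from $t=0$, and the AdS--Schwarzschild piece outside its horizon contains no closed minimal surfaces; then take the infimum over extensions and over $\varepsilon$.

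The main obstacle I expect is step (iii)--(iv): controlling the hyperbolic Hawking mass along the collar so that it does not overshoot the Penrose value. In the asymptotically flat case of \cite{M-S,M-X,CCMM} one exploits the monotonicity (or near-monotonicity) of the Hawking mass under the chosen foliation, but the hyperbolic Hawking mass \eqref{eq-AHHdef} has an extra area/volume term $1+|\S|/4\pi$, so one must track how $\int_{\S_t}(H^2 - 4 + \dots)$ and the area evolve and show the net effect keeps the mass below the target up to $\varepsilon$; this requires a careful choice of $\phi$ (taking it large suppresses the "bad" terms at the cost of stretching the collar, exactly as in \cite{M-S}) together with the sign condition from the hypothesis on $K(g_o)$ or $\lambda_1$. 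Matching the AdS--Schwarzschild end smoothly while respecting the scalar curvature bound — rather than merely continuously — is the technical heart, and I would handle it by the same mollification-of-a-corner argument used in \cite{CCMM}, checking that the distributional mean-curvature jump has the favorable sign so Miao-type smoothing applies. The hypotheses $K(g_o) > -3$ and $\lambda_1(-\Delta_{g_o} + K(g_o)) > 0$ are the natural hyperbolic analogues of the conditions in \cite{M-S} and should be exactly what is needed to run the path-of-metrics argument; I do not expect the round-sphere endpoint or the no-minimal-surface check to pose difficulties.
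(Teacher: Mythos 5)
Your proposal follows essentially the same route as the paper: an area-(nearly-)preserving path of metrics from $g_o$ to a round metric (Uniformisation in the $\lambda_1>0$ case, normalised Ricci flow in the $K(g_o)>-3$ case), a warped collar with minimal inner boundary whose lapse is chosen large enough — in the $\lambda_1>0$ case specifically a large multiple of the first eigenfunction of $-\Delta_{g(t)}+K(g(t))$, which is the one mechanism you leave slightly implicit — so that $R\geq -6$ and the hyperbolic Hawking mass of the outer round sphere exceeds $\mhh(\S,g_o,0)=\frac{r_o}{2}(1+r_o^2)$ by only $O(\sqrt\veps)$, followed by a bend-and-mollify gluing to an AdS--Schwarzschild exterior of mass within $\veps$ of that value. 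This matches the paper's proof of Theorem \ref{thm-main} and Corollary \ref{coro1}, so no substantive gap to report.
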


\begin{remark}
The quantity on the right-hand side of this inequality
is precisely the bound in the conjectured Riemannian Penrose inequality in the asymptotically hyperbolic case. This conjecture states that for an asymptotically hyperbolic manifold with scalar curvature $R \geq -6$, with outermost minimal surface $\S$, we have
\begin{align*}
m\geq \sqrt{\frac{|\S|_{g_o}}{16\pi}}\(1+\frac{|\S|_{g_o}}{4\pi}\),
\end{align*}
where $m$ is the hyperbolic mass of the manifold (see Section \ref{SS-mass}). The interested reader is referred to the review article by Mars \cite{Mars}. This inequality has since been proven in some cases (see for example \cite{DGS}, \cite{dLG} and \cite{A}), however the general case remains open. If the general case is established, Theorem \ref{main1} then implies the equality
\begin{align*}
\m_B^{AH}(\S,g_o,H_{o}=0)=\sqrt{\frac{|\S|_{g_o}}{16\pi}}\(1+\frac{|\S|_{g_o}}{4\pi}\),
\end{align*}
in analogy with the asymptotically flat case established in \cite{M-S}.
\end{remark}

\begin{thm}\label{main2}
Let $(\S \cong \bS^2,g_o,H_o)$ be Bartnik data with Gaussian curvature $K(g_o)>-3$ and $H_o$ a positive constant. Assume that its hyperbolic Hawking mass satisfies
\begin{align*}
\mhh(\S,g_o,H_o)>-\(\frac{|\S|_{g_o}}{4\pi} \)^{\frac{3}{2}}.
\end{align*}
Then its hyperbolic Bartnik mass satisfies 
\begin{align*}
\m_B^{AH}(\S,g_o,H_o) \leq \mhh(\S,g_o,H_o)+\textnormal{Err}(H_o\sqrt{\alpha}),
\end{align*}
where $\textnormal{Err}(y)$ approaches zero as $y$ goes to zero, and $\alpha$ approaches zero as $g_o$ tends to a round metric. In particular, the upper bound for the hyperbolic Bartnik mass approaches the hyperbolic Hawking mass as $H_o$ goes to zero or as $g_o$ tends to a round metric.
\end{thm}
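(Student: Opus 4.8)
The plan is to follow the two-stage construction outlined in the introduction: first build a collar manifold $([0,1]\times\S, \gamma)$ with scalar curvature $R \geq -6$, one boundary component isometric to $(\S,g_o)$ with mean curvature $H_o$, and the other boundary a round sphere whose hyperbolic Hawking mass is controlled; then glue this collar smoothly (or with a distributional mean-curvature jump of the favorable sign, to be smoothed afterward in the spirit of \cite{M-S}) to an AdS-Schwarzschild manifold of mass slightly exceeding the Hawking mass of the outer collar boundary, and verify this composite manifold is an admissible asymptotically hyperbolic extension in the sense of the definition in Section \ref{Sdefn}. The hyperbolic Bartnik mass is then bounded above by the mass of the glued AdS-Schwarzschild piece, which we will arrange to be $\mhh(\S,g_o,H_o) + \textnormal{Err}(H_o\sqrt{\alpha})$.

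For the collar itself I would take the ansatz $\gamma = \phi(t)^2\,dt^2 + u(t)^2\, g_o$ (or a path of metrics $g_t$ interpolating from $g_o$ to a round metric as in \cite{M-S}, \cite{CCMM}), compute the scalar curvature, and choose the warping functions so that $R(\gamma) \geq -6$ holds pointwise. The hypothesis $K(g_o) > -3$ is exactly what makes the $t=0$ end admissible: it guarantees that the scalar curvature constraint can be satisfied near the inner boundary where the metric is still close to $g_o$, since the relevant curvature term involves $K(g_o)$ compared against $-3$ (half of $-6$). The parameter $\alpha$ measures how far $g_o$ is from round — e.g. via $\|g_o - r^2 g_{\bS^2}\|$ in a suitable norm, $r^2 = |\S|_{g_o}/4\pi$ — and controls how long the collar must be and how much the Hawking mass can drift along it; as $\alpha \to 0$ the collar can be taken arbitrarily short and mass-preserving. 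The mean curvature of the inner boundary is prescribed to equal the positive constant $H_o$ by choosing $\phi(0)$, $u(0)$, $u'(0)$ appropriately; here the CMC assumption is what allows a clean matching, and the deviation of the induced mean curvature data contributes the $\textnormal{Err}(H_o\sqrt\alpha)$ term together with the curvature-driven drift.

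The key quantitative input is tracking the hyperbolic Hawking mass $\mhh$ along the level sets $\{t\}\times\S$: one derives a monotonicity-type differential inequality for $t\mapsto \mhh(\{t\}\times\S)$ using the Gauss equation and the scalar curvature bound $R\geq -6$, in the manner of Miao–Xie \cite{M-X}, showing the Hawking mass changes by at most $\textnormal{Err}(H_o\sqrt\alpha)$ over the collar. The hypothesis $\mhh(\S,g_o,H_o) > -(|\S|_{g_o}/4\pi)^{3/2}$ ensures the outer Hawking mass stays in the range of masses realized by AdS-Schwarzschild metrics with the given horizon area (the AdS-Schwarzschild mass parameter can dip slightly negative but is bounded below precisely by $-(A/4\pi)^{3/2}$-type quantities), so the gluing target exists; and it also keeps the area radius monotone so that the round sphere at $t=1$ can be isometrically matched to a coordinate sphere of the AdS-Schwarzschild manifold. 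Finally, one checks admissibility: the composite has no closed minimal surfaces enclosing $\S$ (by choosing the collar's area function strictly monotone and gluing outside the AdS-Schwarzschild horizon), and it is genuinely asymptotically hyperbolic with a well-defined hyperbolic mass equal to the AdS-Schwarzschild parameter.

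I expect the main obstacle to be the bookkeeping in the gluing step: matching the AdS-Schwarzschild metric to the round outer end of the collar requires aligning not only the area radius but also the mean curvature (so that no minimal surface is created and the scalar curvature bound survives the junction), and making the error from this mismatch — together with the Hawking-mass drift through the collar — genuinely of the form $\textnormal{Err}(H_o\sqrt\alpha)\to 0$ demands careful estimates on how the collar length, warping derivatives, and boundary mean curvature deviations all scale with $H_o$ and $\alpha$. Smoothing the corner (via a mollification that preserves $R\geq -6$, as in \cite{M-S} or \cite{CCMM}) while not destroying these estimates is the delicate technical heart of the argument.
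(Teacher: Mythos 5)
Your high-level architecture matches the paper's: a collar realising $(\S,g_o,H_o)$ at the inner boundary, rotationally symmetric with controlled hyperbolic Hawking mass at the outer boundary, glued to an AdS-Schwarzschild exterior via the smoothing lemma of Section \ref{Sgluing}, and then the bookkeeping that the extension's total mass is $\mhh(\S,g_o,H_o)+\textnormal{Err}(H_o\sqrt\alpha)$. But two concrete ingredients that actually make the estimate go through are missing, and one step as you sketch it would not work.

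First, the path of metrics. You write ``a path of metrics $g_t$ interpolating from $g_o$ to a round metric as in \cite{M-S}, \cite{CCMM}''. Those constructions use the Uniformisation Theorem and preserve the eigenvalue condition $\lambda_1(-\Delta+K)>0$, or positivity of $K$. Here the hypothesis is $K(g_o)>-3$, which is \emph{not scale invariant}, so the uniformisation-based path does not preserve it. The paper instead constructs the path by area-normalised 2D Ricci flow (Lemma \ref{Ricci2D}) precisely because the maximum principle gives $K(g(t))\ge\min\{0,\min_\S K(g_o)\}$, preserving any negative lower bound. Without this replacement your collar need not have $R\ge -6$ near the inner end.

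Second, the collar ansatz and the quantitative error. You propose a generic warped product $\phi(t)^2dt^2+u(t)^2g_t$ and appeal to a ``monotonicity-type differential inequality'' for $\mhh$ along the level sets. The paper does not derive such a monotonicity inequality; it makes a very specific choice, taking the warping profile to be the AdS-Schwarzschild radial function $u_{m,b}$ with \emph{negative} mass parameter $m$, coupling $k$ to $H_o,m,b$ via \eqref{eq-kmHpos}, choosing $A$ slightly above an explicit $A_o$ determined by $\alpha,\beta,r_o$, and then sending $|m|\to\infty$. It is this limit that makes $k^2m$ converge, keeps the scalar curvature strictly above $-6$, and produces the explicit $\textnormal{Err}(H_o\sqrt\alpha)$ term (linear in the $b=0$ case, exponential in $A_oH_o$ in the $b>0$ case). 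Your sketch leaves entirely open how $\phi,u$ should be chosen so that the Hawking-mass drift is $O(H_o\sqrt\alpha)$ while simultaneously $R\ge-6$; absent the $u_{m,b}$ profile and the $m\to-\infty$ limit there is no evident mechanism forcing these two constraints to be compatible. Similarly, your aside about a ``distributional mean-curvature jump of the favorable sign'' points toward a positive-mass-with-corners argument, whereas the paper's gluing is a genuine $C^\infty$ construction (Lemmas \ref{lemma-gluing} and \ref{lemma-bending}, Proposition \ref{prop-gluing2}) that preserves $R>-6$ through mollification and bending; relying on a corner-type argument would require a different (and here unnecessary) positive mass theorem and would not give the clean foliation by mean-convex spheres needed to rule out enclosing minimal surfaces.

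Your reading of the hypothesis $\mhh(\S,g_o,H_o)>-(|\S|_{g_o}/4\pi)^{3/2}$ is essentially right: it is exactly $\frac{H_o^2r_o^2}{4}<1+3r_o^2$, which guarantees $\mhh(\S_1)\ge -u_{m,b}(Ak)^3$ so that Proposition \ref{prop-gluing2}(iii) is satisfiable and a matching AdS-Schwarzschild exterior exists.
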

The precise form of the error term $\textnormal{Err}(H_o\sqrt{\alpha})$ appearing in Theorem \ref{main2} can be found in Theorems \ref{thm-extension1} and \ref{thm-extension2}.

	The outline of this paper is as follows. In Section \ref{Sgluing}, we provide a tool to smoothly glue together two spherically symmetric Riemannian manifolds while keeping the scalar curvature bounded from below. In Section \ref{SHzero}, we consider the case where the Bartnik data corresponds to a stable minimal surface (apparent horizon) and prove Theorem \ref{thm-main}.
	
	In Section \ref{SHnonzero}, we give a collar construction motivated by that of Miao and Xie~\cite{M-X} and study its behaviour with respect to the variation of several parameters. Then we smoothly glue it to a spatial AdS-Schwarzschild manifold to prove Theorems \ref{thm-extension1} and \ref{thm-extension2}.
	
	It should be remarked that, while we consider only $2$-dimensional Bartnik data ($3$-dimen\-sional asymptotically hyperbolic manifolds) here, we do not rely on the dimension in any critical way. Miao and the first-named author have shown that the construction method of asymptotically flat extensions in the work of Mantoulidis and Schoen \cite{M-S} can be extended to higher dimensions in \cite{CM}. Provided one can obtain the corresponding smooth path of metrics needed for the collar extension, following this work would naturally lead to a higher dimensional analogue of the work considered here. However, for the sake of exposition, we do not pursue any higher dimensional result here, except in Section \ref{Sgluing} where the proofs are nearly identical in higher dimensions and no additional definitions nor concepts are needed.
	
\bigskip

{\bf Acknowledgements} 
All three authors thank the Erwin Schr\"{o}dinger Institute for hospitality and support during our visits in 2017 in the context of the program \emph{Geometry and Relativity}. AJCP and CC thank the Carl Zeiss foundation for generous support. The work of CC and SM was partially supported by the DAAD and Universities Australia. The work of AJCP was partially supported by the NSF grant DMS 1452477. The work of CC is supported by the Institutional Strategy of the University of T\"ubingen (Deutsche Forschungsgemeinschaft, ZUK 63). SM is grateful for support from the Knut and Alice Wallenberg Foundation.

\section{Definitions and Facts}\label{Sdefn}
From the perspective of general relativity, an asymptotically hyperbolic Riemannian $3$-manifold represents time-symmetric initial data for a gravitating system in the presence of a (negative) cosmological constant. Generally, the cosmological constant is set to $-3$, in which case time-symmetric initial data satisfying the dominant energy condition corresponds to asymptotically hyperbolic Riemannian $3$-manifolds with scalar curvature bounded below by $-6$. One may view the cosmological constant as a (negative) vacuum energy density. Therefore, if one seeks to measure the quasi-local mass of a bounded domain $\Omega$ in such a manifold, the vacuum energy density should somehow be compensated for. As an important example of such a compensation, one may compare the Hawking mass~\cite{Hawking}
	\begin{align*}
	\m_H(\S,g,H)\definedas\sqrt{\frac{|\S|_g}{16\pi}}\( 1-\frac{1}{16\pi}\int_\S H^2\,d\sigma \) 
	\end{align*}
	with the following hyperbolic version that is known in the literature
	\begin{align}\label{eq-AHHdef}
	\mhh(\S,g,H)\definedas\sqrt{\frac{|\S|_g}{16\pi}}\( 1-\frac{1}{16\pi}\int_\S \left(H^2-4\right)d\sigma \).
	\end{align}
		
\subsection{Hyperbolic Bartnik mass}\label{SS-mass}
The natural analogue of the usual Bartnik mass of Bartnik data $(\S,g,H)$ is given by the infimum, over a space of ``admissible asymptotically hyperbolic extensions" $(M,\gamma)$ of $(\S,g,H)$, of the well-known total hyperbolic mass -- see below. We briefly describe the notion of asymptotically hyperbolic Riemannian $3$-manifolds and the mass of such a manifold.
	
A Riemannian $3$-manifold $(M,\gamma)$ is called \emph{asymptotically hyperbolic} if there exists a compact set $K \subset M$, a closed ball $B\subset\mathbb{H}^3$, and a diffeomorphism from $M\setminus K$ to $\mathbb{H}^{3}\setminus B$ such that the push-forward of the metric $\gamma$ can be written as the hyperbolic metric plus a well-controlled error term. Then, the \emph{(total) hyperbolic mass} $m(M,\gamma)$ is given by a quantity computed from this error term which turns out to be an invariant under change of diffeomorphism. For precise definitions the reader is referred to X.~Wang \cite{WangMass}; see also Chru\'sciel and Herzlich \cite{ChruscielHerzlich}. Since all the extensions considered here will be exactly 3-dimensional spatial AdS-Schwarzschild manifolds $((r_+,\infty)\times\mathbb{S}^2,g_{m,1})$ (see below) outside a compact set (which are well-known to be asymptotically hyperbolic of mass $m$), we do not go into any detail here.

Now let $\mathcal{A}(\S,g,H)$ be the set of \emph{admissible extensions $(M,\gamma)$}; namely, of smooth asymptotically hyperbolic $3$-manifolds $(M,\gamma)$ with scalar curvature $R(\gamma)\geq -6$, containing no minimal surfaces enclosing the boundary (except possibly the boundary itself), and whose boundary is isometric to $(\S,g)$ with induced mean curvature $H$. We then define the \emph{hyperbolic Bartnik mass}
\begin{align*}
\m_{B}^{AH}(\S,g,H)\definedas\inf\{m(M,\gamma): (M,\gamma)\in \mathcal{A}(\S,g,H) \}, 
\end{align*}
where $m(M,\gamma)$ denotes the total hyperbolic mass of $(M,\gamma)$.

\begin{remark}
The condition that an admissible extension must not contain minimal surfaces enclosing the boundary is introduced to rule out extensions where the Bartnik data is hidden behind a horizon (minimal surface). As Bartnik pointed out in his original definition in the asymptotically flat case, extensions like this could have arbitrarily small (positive) mass, rendering the definition somewhat meaningless.
		
It is worth remarking that if the boundary is outer minimising then, in the asymptotically flat case, it is known that the Hawking mass of the boundary provides a lower bound for the ADM mass via Huisken and Ilmanen's proof of the Riemannian Penrose inequality \cite{H-I01}. For this reason, sometimes the class of admissible asymptotically flat extensions is further restricted to manifolds where the boundary is outer-minimising. It is conjectured that this restriction does not change the infimum.
\end{remark}	

\begin{remark}
Bartnik's original definition of quasi-local mass assigned a mass to a domain $\Omega$ with compact boundary contained in an asymptotically flat manifold -- as opposed to assigning the mass to Bartnik data, as described above. The extensions that he considered were asymptotically flat manifolds in which this bounded domain could be isometrically embedded. However, if a minimising extension exists then it will generically fail to be smooth at $\partial\Omega$, which leads one to consider extensions that are not smooth at $\partial\Omega$ provided that an appropriate version of the positive mass theorem holds for such a non-smooth manifold. In this case the appropriate positive mass theorem for such a manifold with corner along a hypersurface was established in the asymptotically flat case independently by Miao \cite{Miao02}, and by Shi and Tam~\cite{ShiTam02}. More recently, Bonini and Qing \cite{BQ} have proven a positive mass theorem for asymptotically hyperbolic manifolds with corners.

Therefore, one often considers ``extensions'' to be asymptotically flat manifolds with boundary satisfying certain geometric boundary conditions \cite{Bartnik-TsingHua}. The Bartnik mass is then considered to be a quantity associated with Bartnik data, as described in Section \ref{SIntro}, both in the asymptotically flat and the asymptotically hyperbolic setting.
\end{remark}

\subsection{Spatial AdS-Schwarzschild manifolds}\label{SS-adsschwarzschild}
	
	Arguably the most important example of an asymptotically hyperbolic manifold is the spatial AdS-Schwarzschild manifold. This is the asymptotically hyperbolic analogue of the standard spatial Schwarzschild manifold, and represents canonical initial data for a static, spherically symmetric black hole sitting in the presence of a negative cosmological constant, and an otherwise vacuum spacetime. For later convenience, we will introduce a larger class of Riemannian $3$-manifolds with two parameters, $m\in\R$  and $b > 0$. The AdS-Schwarz\-schild metric will arise as a special case.
	
Now consider the family of metrics on $(r_+,\infty) \times \bS^2$ given by
	\begin{align}\label{eq-AdS-Schwarzschild-metric}
	g_{m,b}=\(1+br^2-\frac{2m}{r}\)^{-1}dr^2+r^2 g_*,
	\end{align}
	where $r_+=r_+(m,b)$ is the largest root of $1+br^2-\frac{2m}{r}$ if $m>0$ and otherwise $r_+=0$. Here $g_*$ denotes the standard round metric on $\bS^2$ with area $4\pi$.
		
	The metric $g_{m,b}$ is a static solution to the vacuum Einstein constraint equations in the presence of a cosmological constant $\Lambda=-3b$ or ``radius'' $1/\sqrt{b}$ when considered as asymptotic to a hyperboloid. In particular, it has scalar curvature $R(g_{m,b})=-6b$ and by special choices of the parameters $m$ and $b$, we recover several well-known manifolds: When $m=0$ and $b>0$, we recover a hyperbolic space of radius $\frac{1}{\sqrt b}$; when $b=0$ and $m\neq0$ we recover the spatial Schwarzschild metric of mass $m$; when $b=1$ and $m\neq0$ we recover the AdS-Schwarzschild metric of mass $m$. We remark that the hyperbolic Hawking mass \eqref{eq-AHHdef} is constant along the centred spheres in the spatial AdS-Schwarzschild manifold just as the usual Hawking mass is constant along the centred spheres in the spatial Schwarzschild manifold.

We will now discuss further properties of the metrics $g_{m,b}$, which will be used later in the collar extensions constructed in Section \ref{SHnonzero}. For any $r_o \geq r_+$, define
\begin{align*}
s(r) = \dint_{r_o}^r \( 1+b t^2 - \frac{2m}{t} \)^{-1/2} \, dt
\end{align*}
on $[r_o,\infty)$. Then we can write the metric $g_{m,b}$ as
\begin{align}\label{sch-um}
g_{m,b}= ds^2 + u_{m,b}(s)^2 g_*,
\end{align}
where $u_{m,b}$, defined on $[0,\infty)$, is the inverse of $s=s(r)$ on $[r_o,\infty)$. 
\newpage

One can directly check that the function $u_{m,b}$ satisfies the following properties:\\[-1.5ex]
	\begin{enumerate}[(a)]\itemsep1.5ex 
		\item $u_{m,b}(0)=r_o$, 
		\item $u_{m,b}' = \sqrt{1+bu_{m,b}^2- \frac{2m}{u_{m,b}}}$, and
		\item $u_{m,b}'' = b u_{m,b}+ \dfrac{m}{u_{m,b}^2}$.\\[-1.5ex]
	\end{enumerate}
	In addition, combining (a), (b), and (c) above, one can directly check that $u_{m,b}$ satisfies
	\begin{align}
	1+3bu_{m,b}^2 - u_{m,b}'^2 - 2 u_{m,b} u_{m,b}''=0,
	\end{align}
	which is equivalent to the condition $R(g_{m,b})= - 6b$, as we will see below.

\subsection{Collar extensions}\label{sec:computations}
The asymptotically flat extensions constructed in \cite{M-S} and \cite{CCMM} provide a way to construct admissible extensions (in the sense of the Bartnik mass) with control on the ADM mass for minimal and CMC Bartnik data, respectively. A key step in the construction in the minimal case is to obtain a collar extension $(M,\gamma)$ of a given $(\S,g_{o})$ in such a way that its initial boundary is isometric to $(\S,g_{o})$ and minimal, while controlling the growth of the area along the collar. For Bartnik data $(\S,g_{o},H_{o})$ with constant $H_{o} > 0$, one is more generally interested in controlling the growth of the Hawking mass along the collar. For our construction, we will apply the general idea first put forward in \cite{M-S}, using appropriate collar extensions as in \cite{CCMM} (see also \cite{M-X}) in the CMC case.
	
One main ingredient of the construction of an extension \`a la Man\-tou\-lidis and Schoen \cite{M-S} is the existence of a path of $2$-metrics connecting a given $2$-metric $g_o$ to a round metric $r_o^{2}\,g_{*}$ of the same area on $\S\cong \bS^{2}$ and using it to construct a collar extension realising the given Bartnik data as the inner boundary of the collar. More concretely, for a $2$-metric $g_o$ (obeying certain curvature conditions), they prove existence of a path $\{g(t)\}$ connecting $g(0)=g_o$ to a round metric $g(1)=r_o^{2}\,g_{*}$ while preserving the curvature conditions such that\\[-1.5ex]
\begin{enumerate}\itemsep1.5ex
\item $g'(t) = 0$ on $[\theta,1]$ for some $0<\theta<1$, and
\item $\tr_{g(t)} g'(t)=0$ on $[0,1]$,\\[-1.5ex]
\end{enumerate}
where the latter condition implies that the area form is preserved along the path. Existence of such a path under the curvature conditions we will impose is obtained in the minimal surface case using the Uniformisation Theorem as in \cite{M-S} (see Section \ref{SHzero}) and in the CMC case using Ricci flow (see Section \ref{SHnonzero}).
	
	A \emph{collar extension} is then given by the manifold $[0,1] \times \S$ with metric $\gamma$ 
	\begin{align}\label{collar-general}
	\gamma \definedas v(t,\cdot)^2 dt^2 + E(t)^2 g(t),
	\end{align}
	where $v$ and $E$ are smooth positive functions, and $E$ satisfies $E'>0$ and $E(0)=1$. Throughout this work, we will use the notation $\S_{t}\definedas\{t\}\times\S$ to denote the $t$-level set of $[0,1]\times\S$. Here, $\gamma$ induces the metric $g_o$ on $\S_0$ and induces a round metric on $\S_t$ for all $t \in [\theta,1]$. For simplicity we avoid writing explicitly the dependence on $x\in\S$ of various quantities.
	
	A direct computation shows that the scalar curvature of the collar extension is given by
	\begin{align}\label{scalar-collar-general}
	\begin{split}
		R( \gamma) &= 2v(t,\cdot)^{ -1} \( - \Lap_{E(t)^2g(t)} v(t,\cdot) + \dfrac{1}{2} R(E(t)^{2}g(t)) v(t,\cdot)  \)\\
		&\quad + v(t,\cdot)^{ -2}\left[  \dfrac{-2 E'(t)^2 -4  E(t)E''(t)}{E(t)^2} - \dfrac{1}{4} |g'(t)|^2_{g(t) }  + 4 \dfrac{\pr_t v(t,\cdot)}{v(t,\cdot)} \dfrac{E'(t)}{E(t)}\right],
	\end{split}
	\end{align}
(see \cite{M-S,M-X}). The mean curvature of the $t$-level set $\S_t$ along the collar extension is given by
	\begin{align}\label{eq-Hs}
	H(t) = \dfrac{2 E'(t)}{v(t,\cdot) E(t)}.
	\end{align}
We will be particularly interested in the hyperbolic Hawking mass of the $t$-level sets in the case that $v$ is constant on  each $\S_t$, which in that case is given by
	\begin{align} \label{Hawking-level}
	\mhh(\S_t ) = \dfrac{E(t) r_o}{2}\(1-  \dfrac{r_o^2 E'(t)^2}{v(t,\cdot)^2}  + r_o^2 E(t)^2 \).
	\end{align}
	
	Expressions \eqref{Hawking-level} and \eqref{scalar-collar-general} suggest that by choosing carefully the functions $E$ and $v$, one can control the growth of the hyperbolic Hawking mass of the $t$-level sets of the collar extension as well as its scalar curvature. We also note that, as in \cite{CCMM}, condition (1) is not necessary as one can approximate $\{ g(t) \}$ by a family $\{ g_{\theta}(t) \}$ of paths with $\theta \to 1$. However, since the the proof of our main result is constructive, we keep imposing this condition. In the following section we provide tools to smoothly glue a spatial AdS-Schwarzschild manifold to a certain type of collar extension, resulting in admissible extensions of the given data $(\S,g_o,H_{o})$. We remark that the hyperbolic mass of such an extension provides an upper bound for  $\m_{B}^{AH}(\S,g_o,H_{o})$.

	\section{Gluing via an $R>\tau$ bridge in spherical symmetry}\label{Sgluing}
	The main results of this paper are obtained by constructing a collar manifold with one boundary component realising the given Bartnik data and then gluing the collar to an exterior AdS-Schwarzschild manifold via an $R > -6$ bridge. In this section, we provide the gluing tools to be used in Sections \ref{SHzero} and \ref{SHnonzero}. The following lemma is a natural generalisation of Lemma 2.2 in \cite{M-S} (see also Lemma 2.1 of \cite{CCMM}), and the proof closely follows the original one. We state and prove it in $n$ dimensions as it may be of interest for other applications, although we will only apply it for $n=2$.
	\begin{lemma}[Smooth gluing lemma]\label{lemma-gluing}
		Let $f_i:[a_i,b_i]\rightarrow\mathbb{R}^+, i=1,2$, be two smooth positive functions, let $g_*$ be the standard metric on $\bS^n$ ($n\geq2$), and let $\tau\in(-\infty,0]$ be some constant.
		Suppose that\\[-1.5ex]
		\begin{enumerate}[(i)]\itemsep1.5ex
			\item the metric $\gamma_i\definedas ds^2+f_i(s)^2g_*$ has scalar curvature $R(\gamma_i)>\tau$,
			\item $f_1(b_1)<f_2(a_2)$,
			\item $0<f'_1(b_1)<\sqrt{1-\frac{\tau}{n(n-1)}f_1(b_1)^2},$ and
			\item $-\sqrt{1-\frac{\tau}{n(n-1)}f_2(a_2)^2}<f'_2(a_2)\leq f'_1(b_1)$.\\
		\end{enumerate}
		Then, after translating $[a_i,b_i]$ appropriately, there is a smooth positive function $f$ on $[a_1,b_2]$ such that\\[-1.5ex]
		\begin{enumerate}[(I)]\itemsep1.5ex
			\item $f \equiv f_1$ on $[a_1,\frac{a_1+b_1}{2}]$ and $f \equiv f_2$ on $[\frac{a_2+b_2}{2},b_2]$ and
		\item the metric $\gamma\definedas dt^2+f(s)^2g_*$ has scalar curvature $R(\gamma_b)>\tau$ on $[a_1,b_2]\times \bS^n$.\\[-1.5ex]
		\end{enumerate}
		
		In addition, if $f_i' >0$ on $[a_i,b_i]$ then $([a_1,b_2]\times \bS^n,\gamma)$ is foliated by mean convex CMC spheres.
	\end{lemma}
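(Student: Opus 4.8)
The plan is to convert the gluing into a one–dimensional problem via the standard formula for the scalar curvature of a warped product. Writing the metric on an interval times $\bS^n$ as $\gamma = dt^2 + f(t)^2 g_*$ one has
\[
R(\gamma) = \frac{n(n-1)\bigl(1 - f'(t)^2\bigr)}{f(t)^2} - \frac{2nf''(t)}{f(t)},
\]
so, since $f > 0$, the condition $R(\gamma) > \tau$ is equivalent to the pointwise differential inequality $f'' < \Phi(f,f')/(2nf)$, where $\Phi(p,q) := n(n-1)(1-q^2) - \tau p^2$. Observe that $\Phi(p,q) > 0$ is exactly the inequality $q^2 < 1 - \tfrac{\tau}{n(n-1)} p^2$, and that this threshold is nondecreasing in $p$ because $\tau \le 0$. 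In this language hypothesis (i) says each $f_i$ obeys the differential inequality on $[a_i,b_i]$; (iii) says $\Phi(f_1(b_1),f_1'(b_1)) > 0$ with $f_1'(b_1) > 0$; and (ii)--(iv) give $f_2'(a_2) \le f_1'(b_1)$ together with $\Phi(f_2(a_2),f_2'(a_2)) > 0$, the required upper bound on $f_2'(a_2)$ following from $f_2'(a_2) \le f_1'(b_1) < \sqrt{1 - \tfrac{\tau}{n(n-1)}f_1(b_1)^2} \le \sqrt{1 - \tfrac{\tau}{n(n-1)}f_2(a_2)^2}$, using (ii) and $\tau \le 0$.

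\emph{Step 1 (straightening the ends).} I would first modify $f_1$ only on a short interval $[b_1 - \eta, b_1]$ and $f_2$ only on $[a_2, a_2 + \eta_2]$, keeping $R > \tau$, so that afterwards $f_1$ is affine with some slope $\beta_1 > 0$ near $b_1$, $f_2$ is affine with some slope $\beta_2$ near $a_2$, the modified functions agree with the originals to infinite order at $b_1 - \eta$ and $a_2 + \eta_2$, and one still has $\beta_1 > \beta_2$, $f_1(b_1) < f_2(a_2)$, $\Phi(f_1(b_1),\beta_1) > 0$ and $\Phi(f_2(a_2),\beta_2) > 0$. This is the main point of difficulty. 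It works because $R(\gamma_i) > \tau$ provides a strictly positive margin in $f_i'' < \Phi(f_i,f_i')/(2nf_i)$ on a compact neighbourhood of the gluing endpoint, so that prescribing $f''$ to interpolate --- over a sufficiently short interval --- between $f_i''$ and $0$ preserves the inequality even when $f_i''$ is positive at the endpoint; taking $\eta, \eta_2$ small makes $\beta_1, \beta_2$ and the new endpoint values as close as one likes to $f_1'(b_1), f_2'(a_2), f_1(b_1), f_2(a_2)$, where the strict inequalities (ii)--(iv) hold. (If $f_1'(b_1) > f_2'(a_2)$, a small modification already yields $\beta_1 > \beta_2$; if $f_1'(b_1) = f_2'(a_2)$ then both are positive by (iii), and one bends $f_2$ down slightly to arrange $0 < \beta_2 < \beta_1$.) From now on I assume $f_1$ is affine near $b_1$ and $f_2$ is affine near $a_2$.

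\emph{Step 2 (a concave bridge).} Translate $[a_2,b_2]$ so that $a_2 = b_1 + G$ for a gap $G > 0$ to be fixed, and look for $f$ on $[b_1,a_2]$ with $f'' = -\psi \le 0$, where $\psi \ge 0$ is smooth and vanishes to infinite order at $b_1$ and $a_2$, and $f(b_1) = f_1(b_1)$, $f'(b_1) = \beta_1$. Then $f'$ decreases monotonically from $\beta_1$; imposing $\int_{b_1}^{a_2}\psi = \beta_1 - \beta_2 > 0$ makes $f'(a_2) = \beta_2$, and $f(a_2) - f(b_1) = \int_{b_1}^{a_2}f'$ can be made to take any prescribed value in the open interval $(\beta_2 G,\, \beta_1 G)$ by distributing the mass of $\psi$ suitably. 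Since $f_2(a_2) - f_1(b_1) > 0$, choosing $G > (f_2(a_2) - f_1(b_1))/\beta_1$ --- and, in case $\beta_2 > 0$, also $G < (f_2(a_2) - f_1(b_1))/\beta_2$, which is consistent because $\beta_2 < \beta_1$ --- places $f_2(a_2) - f_1(b_1)$ in that interval, so $\psi$ can be chosen with $f(a_2) = f_2(a_2)$. By construction $f$ has vanishing derivatives of order $\ge 2$ at $b_1$ and $a_2$ and matches the value and slope of the affine pieces of $f_1$ and of the translated $f_2$ there, so the concatenation is a smooth positive function on $[a_1,b_2]$ that equals $f_1$ on $[a_1, b_1 - \eta] \supset [a_1, \tfrac{a_1 + b_1}{2}]$ and $f_2$ on $[a_2 + \eta_2, b_2] \supset [\tfrac{a_2 + b_2}{2}, b_2]$; this gives (I).

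\emph{Step 3 (curvature on the bridge, and the CMC addendum).} On $[b_1,a_2]$ the bridge $f$ is concave with $f(b_1) = f_1(b_1) \le f_2(a_2) = f(a_2)$, hence lies above the chord joining these endpoint values, so $f \ge f_1(b_1) > 0$, while $f' \in [\beta_2,\beta_1]$. I would check $\Phi(f,f') > 0$ by cases: where $f' \ge 0$ one has $(f')^2 \le \beta_1^2 < 1 - \tfrac{\tau}{n(n-1)}f_1(b_1)^2 \le 1 - \tfrac{\tau}{n(n-1)}f^2$ by monotonicity of the threshold in $f$ (using $\tau \le 0$ and $f \ge f_1(b_1)$); and where $f' < 0$, monotonicity of $f'$ makes this happen only after $f'$ has crossed $0$, where $f$ is decreasing so $f \ge f(a_2) = f_2(a_2)$ and hence $(f')^2 \le \beta_2^2 < 1 - \tfrac{\tau}{n(n-1)}f_2(a_2)^2 \le 1 - \tfrac{\tau}{n(n-1)}f^2$. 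In either case $\Phi(f,f') > 0$, so $f'' = -\psi \le 0 < \Phi(f,f')/(2nf)$ and $R(\gamma) > \tau$ on the bridge; on the unmodified parts $R > \tau$ by (i) and on the short straightening intervals by Step 1, which is (II). Finally, if $f_i' > 0$ on $[a_i,b_i]$, then $f_2'(a_2) > 0$, so Step 1 can be carried out with $\beta_2 > 0$; then $f' \in [\beta_2,\beta_1] \subset (0,\infty)$ on the bridge, $f' > 0$ on the (short) straightening intervals since the endpoint slopes are positive, and $f' > 0$ elsewhere by hypothesis, so $f' > 0$ on all of $[a_1,b_2]$. Since every slice $\{t\} \times \bS^n$ in $dt^2 + f(t)^2 g_*$ is totally umbilic with mean curvature $n f'(t)/f(t)$, it follows that $([a_1,b_2]\times\bS^n,\gamma)$ is foliated by mean convex CMC spheres.
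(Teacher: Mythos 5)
Your proof is correct, and it takes a route that differs from the paper's in one key technical step, though the core geometric idea is the same. Both arguments reduce the problem to the one-dimensional differential inequality $f'' < \Phi(f,f')/(2nf)$ (written $\Omega[f]-f''>0$ in the paper), bridge the two profiles with a concave interpolant, and verify $\Phi(f,f')>0$ on the bridge by tracking the sign of $f'$ and using that the threshold $1-\tfrac{\tau}{n(n-1)}f^2$ is nondecreasing in $f$ when $\tau\le 0$ --- your Step 3 is essentially the paper's verification that $\Omega[\widehat f\,]>0$. Where you diverge is in how smoothness is achieved: the paper builds the bridge at $C^{1,1}$ regularity using a $C^1$ slope function $\zeta$ and then mollifies with a variable-width mollifier, the delicate part being the uniform gap $3d$ of \eqref{eq-inf-p} needed to show the mollified $f_\varepsilon$ still satisfies the inequality; you instead pre-straighten $f_1$ and $f_2$ to be affine near the gluing endpoints (your Step 1), which lets you attach a genuinely smooth concave bridge with no mollification. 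This trades the mollification estimates for the straightening step, which you correctly flag as the delicate point; your sketch is sound --- strictness of $R(\gamma_i)>\tau$ gives a uniform margin on a compact interval, (iii)--(iv) and continuity give $\Phi>0$ near the endpoints so that $\tilde f_i''=0$ is admissible there, and the $C^0$-change in $f_i,f_i'$ is $O(\eta)$ --- but it does deserve to be written out with the same care the paper gives the mollification. One small simplification is available: in the borderline case $f_1'(b_1)=f_2'(a_2)$ you do not need to ``bend $f_2$ down''; taking $\psi\equiv 0$ (a straight-line bridge) with the gap $G$ chosen so that $\beta_1 G = f_2(a_2)-f_1(b_1)$ works directly and is covered by your Step 3. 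The CMC addendum is handled correctly, with the same caveat that Step 1 must be arranged to keep $\beta_2>0$, which you note.
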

	\begin{proof}
		First note that it is possible to translate $[a_i,b_i]$ to ensure that $b_1<a_2$ as well as the existence of a function $\zeta\in C^1[b_1,a_2]$ satisfying (see \cite[Lemma 2.1]{CCMM})\\[-1.5ex]
		\begin{itemize}\itemsep1.5ex
			\item $\zeta(b_1)=f'_1(b_1)$,
			\item $\zeta(a_2)=f'_2(a_2)$,
			\item $\zeta'\leq 0$ on $[b_{1},a_{2}]$, and
			\item $ \int_{b_1}^{a_2}\zeta(t)\,dt=f_2(a_2)-f_1(b_1)>0$.\\[-1.5ex]
		\end{itemize}
	Using $\zeta$, we then define the function 
		\begin{align*}
		\widehat f (s) \definedas f_1(b_1) + \int_{b_1}^s  \zeta(t)\, dt,
		\end{align*}
		which clearly satisfies\\[-1.5ex]
		\begin{itemize}\itemsep1.5ex
			\item $ \widehat{f} (b_1)= f_1(b_1)$ and $ \widehat f (a_2)= f_2(a_2)$,
			\item $ \widehat{f} ' (b_1)= f_1'(b_1)$ and $ \widehat f' (a_2) = f_2'(a_2)$,
			\item $  f_1'(b_1 ) \geq \widehat{f}' \geq f_2'(a_2) $ on $(b_1,a_2)$, and
			\item  $  \widehat{f} '' =  \zeta' \le 0  $ on $[b_1,a_2]$.\\[-1.5ex]
		\end{itemize}
		From the condition $\widehat{f}''\leq0$ and $f'_1(b_1)>0$, we have $\widehat{f}>\widehat{f}(b_1)$ on $(b_1,a_2]$. On $[a_1, b_2]$, define 
		\begin{align}\label{eq:defwidetildef}
		\widetilde{f} \definedas \begin{cases}
		f_1&\text{ on } [a_1,b_1]\\
		\widehat{f} &\text{ on } (b_1,a_2)\\
		f_2&\text{ on } [a_2,b_2]
		\end{cases} . 
		\end{align}
		Then clearly $\widetilde{f}\in C^{1,1}([a_1, b_2]) $, $\widetilde{f}$ is $C^2$ away from $b_1,a_2$ and $ \widetilde f > 0 $. We now smooth out $\widetilde{f}$ using an appropriate mollification (as in \cite{CM,CCMM}):
		
		Let $\delta >0$ satisfy
		\begin{align*}
		\dfrac{a_1+b_1}{2} < b_1 - \delta\ \ \text{and} \ \ a_2+\delta < \dfrac{a_2+b_2}{2}. 
		\end{align*}
		Now let $\eta_\delta$ be a smooth cut-off function that equals $1$ on $[b_1-\delta,a_2+\delta]$, vanishes on the set $\left[a_1,\frac{a_1+b_1}{2} \right]\cup \left[\frac{a_2+b_2}{2},b_2\right]$, and satisfies $ 0 <  \eta_{\delta}   < 1 $ elsewhere. Let $\phi:\mathbb{R}\to\mathbb [0,\infty)$ be a standard smooth mollifier with compact support in $[-1,1]$ and $\int_{-\infty}^\infty\phi(t)\,dt=1$.
		
		Given any $\varepsilon\in(0,\frac{\delta}{4})  $, we define $f_{\varepsilon}$ by 
		\begin{align}\label{eq-def-fe}
		f_{\varepsilon}(s)\definedas \dint_{-\infty}^\infty \widetilde{f}(s-\varepsilon \eta_{\delta}(s)t)\phi(t)\, dt \quad \text{ for }\quad  s \in [a_1, b_2].  
		\end{align}
		Note that $f_{\varepsilon}$ is smooth on $[a_1,b_2]$, $f_\varepsilon\equiv \widetilde{f}$ on $\left[a_1,\frac{a_1+b_1}{2} \right]\cup \left[\frac{a_2+b_2}{2},b_2\right]$ and 
		\begin{align}\label{eq-def-fe'}
		f'_{\varepsilon}(s)=\dint_{-\infty}^\infty \widetilde{f}' (s-\varepsilon \eta_{\delta}(s)t)(1 - \varepsilon \eta_\delta'(s) t  ) \phi(t)  \,  dt  \quad \forall\, s \in [a_1, b_2] .
		\end{align}
		As $ \widetilde{f}'$ is $C^0$ everywhere and $C^1$ except possibly at $b_1$ and $a_2$, by standard mollification arguments we have for $s \in (b_{1}-\delta,a_{2}+\delta)$
		\begin{align}\label{eq-def-fe''-1}
		f''_{\varepsilon}(s) = & \  \frac{d}{ds} \left(  \dint_{-\infty}^\infty \widetilde{f}' (s-\varepsilon t) \phi(t)  \,  dt \right) 
		=  \   \dint_{-\infty}^\infty \widetilde{f}''  (t) \phi_\varepsilon ( s -t)   \,  dt, 
		\end{align}
		using $\eta_{\delta}(s)=1$, where $ \phi_\varepsilon(s) \definedas \frac{1}{\varepsilon} \phi ( \frac{ s}{\varepsilon} )$. Moreover, for $s \in [a_{1},b_{2}]\setminus[b_1 -  \frac{1}{4} {\delta}, a_2 + \frac{1}{4}  \delta] $ where $\widetilde{f}$ is smooth, we have
		\begin{align}\label{eq-def-fe''-2} 
		\begin{split}
		f''_{\varepsilon}(s) = & \ \int_{-\infty}^\infty \widetilde{f}'' (s-\varepsilon \eta_{\delta}(s)t)\,(1 - \varepsilon \eta_\delta'(s) t  )^2 \phi(t)  \,  dt  \\ 
		& \ - \varepsilon \int_{-\infty}^\infty \widetilde{f}' (s-\varepsilon \eta_{\delta}(s)t)\,  \eta_\delta'' (s) t   \phi(t)  \,  dt.
		\end{split} 
		\end{align}
		
		We will now show that for sufficiently small $\varepsilon>0$,  the metric $\gamma_{\varepsilon}\definedas d s^2 + f_\varepsilon (s)^2  g_*$ has scalar curvature $R(\gamma_{\varepsilon})>\tau$. Given a metric of the form $\gamma\definedas ds^2+f(s)^{2}g_*$ with smooth positive $f$, a direct computation shows that the scalar curvature condition $R(\gamma)>\tau$ for some given $\tau\in\mathbb{R}$ is equivalent to (see \cite{CM} Eq. (4.13))
		\begin{align}\label{eq-fRcond}
		f''<\frac{(n-1)}{2f}\left(1-(f')^2-\frac{\tau f^2}{n(n-1)}\right).
		\end{align}
		Now we define the quantity
		\begin{align*}
		\Omega[f]\definedas \frac{(n-1)}{2f}\( 1-(f')^2-\frac{\tau f^2}{n(n-1)} \)
		\end{align*}
		as a shorthand for the right-hand side of \eqref{eq-fRcond}. In analogy to the asymptotically flat case, the key to this proof will be the fact that
		\begin{align*}
		\Omega[\widetilde{f}\,]-\widetilde f''>3d>0
		\end{align*}
		holds for the function $\widetilde f$ defined by \eqref{eq:defwidetildef} and some number $d>0$. To see this, recall first that $\Omega[\widetilde f\,]\in C^{0,1} ([a_1, b_2])$. Since $\widehat{f}''\leq0$ on $[b_1,a_2]$, $\widetilde{f}$ attains its maximum at some $s_*\in[b_1,a_2]$ and thus  $\widehat{f}'(s_*)=0$. Moreover $\widehat{f}' \geq 0$ on $[b_1,s_*]$ and $\widehat{f}' < 0$ on $(s_*,a_2]$. If $s_* = a_2$, since $\widehat{f}' \leq f_1'(b_1)$ and (iii) holds we know that
		\begin{align*}
		0 \leq \widehat{f} < \sqrt{1 - \dfrac{\tau}{n(n-1)} \widehat{f}\;},
		\end{align*}
		and hence $\Omega[f] > 0$ on $[b_1,a_2]$. If $s_* < a_2$, by the previous argument $\Omega[f] > 0$ on $[b_1,s_*]$ and since $\widehat{f}$ is strictly decreasing on $(s_*,a_2]$ we have $\widehat{f} > f(a_2)$, which implies
		\begin{align*}
		-\sqrt{1 - \dfrac{\tau}{n(n-1)} f_2(a_2)^2} > -\sqrt{1 - \dfrac{\tau}{n(n-1)}\widehat{f}\;}.
		\end{align*}
		Combining this with (iv) and the fact that $\widehat{f}' \geq f_2'(a_2)$, we also obtain $\Omega[\widehat{f}] > 0$ on $[b_1,a_2]$. Now, as $ \Omega [ \widetilde f \,]=\Omega[\widehat{f}\,]  > 0 $ on $[b_1, a_2]$ and $\widetilde f'' =\widehat{f}''\le 0 $  on $(b_1, a_2)$ by construction of $\widetilde f$, and as $\gamma_i$ has scalar curvature $R(\gamma_{i})>\tau$ on $[a_i, b_i] \times {\bS}^{n}$ for $i=1,2$, we have
		\begin{align}\label{eq:Omega}
		\Omega [ \widetilde f\, ]  -\widetilde f''>0
		\end{align}
		on $ [a_1, b_2] \setminus \{ b_1, a_2 \} $. In fact, $\Omega[\widetilde f\,]-\widetilde f''$ can be bounded from below by
		\begin{align}\label{eq-inf-p}
		\inf_{ s \in  [a_1, b_2] \setminus \{ b_1, a_2 \} } \left( \Omega [ \widetilde f\, ](s)  - \widetilde f''(s) \right) \asdefined 3d> 0,
		\end{align}
		as the inequality \eqref{eq:Omega} holds separately on the compact intervals $[a_{i},b_{i}]$ and $[b_{1},a_{2}]$ and because $\widetilde f$ is $C^{2}$ on these intervals, separately. It follows that
		\begin{align*}
		\widetilde{f}''  \leq\Omega[\widetilde{f}\, ]  -3d
		\end{align*}
		everywhere where $\widetilde f''$ is defined. \\
		
		Now we consider $ \Omega [ f_\varepsilon ]$ in order to show that \eqref{eq:Omega} also holds for $f_{\varepsilon}$ so that $R(\gamma_{\varepsilon})>\tau$. By \eqref{eq-def-fe} and \eqref{eq-def-fe'}, we have $ f_\varepsilon \to \widetilde f $ in $ C^1([a_1, b_2])$ as $\varepsilon\to0^{+}$. Hence, $ \Omega[f_\varepsilon] \to \Omega[\widetilde{f}\,]$ in $ C^0 ([ a_1, b_2])$, 
		which shows, for sufficiently small $ \varepsilon$, 
		\begin{align*}
		\sup_{ s \in [a_1, b_2] } \left| \Omega[\widetilde{f}\,] (s) -\Omega[f_\varepsilon] (s) \right|<d.
		\end{align*} 
		Furthermore, by \eqref{eq-def-fe''-1} and \eqref{eq-def-fe''-2}, and from $f_{\varepsilon}\to\widetilde f$ in $C^1([a_{1},b_{2}])$, we find
		\begin{align*}
		f''_\varepsilon(s)<\sup_{ | t - s | < \varepsilon }\widetilde f''(t)+d \quad \forall\,s \in[a_{1},b_{2}],
		\end{align*}
		for sufficiently small $ \varepsilon>0 $.
		It then follows that, for all $t\in[a_{1},b_{2}]$,
		\begin{align*}
		f''_\varepsilon(s)
		&\leq \sup_{ | t - s | < \varepsilon, \,t\neq b_{1},a_{2} } \left( \Omega[\widetilde{f}\,](t)-3d \right) +d\\
		&<\Omega[\widetilde{f}\,](s)-d\\
		&<\Omega[f_\varepsilon](s);
		\end{align*}
		noting that, in the second-to-last inequality, we also used the uniform continuity of $\Omega[\widetilde{f}\,]$ on $[a_1, b_2]$, and hence $ \Omega[\widetilde{f}\,](t)  < \Omega[\widetilde f\,] (s) + d $ for any $ t $ with $ | t - s | < \varepsilon $  provided $ \varepsilon $ is small enough.
		
		We have therefore shown that \eqref{eq:Omega} and thus \eqref{eq-fRcond} holds on $[a_1, b_2]$ with $f$ replaced by $ f_\varepsilon$ for small $\varepsilon $ on $[a_1, b_2] $. That is, the metric $\gamma_{\varepsilon}\definedas d t^2 + f_\varepsilon(s) ^2 g_*$ has scalar curvature $R(\gamma_{\varepsilon})>\tau$ and thus $f\definedas  f_{\varepsilon}$ can act as the sought after profile function $f$ satisfying the conclusions of the lemma for sufficiently small  $\varepsilon>0$.\\ 
		
		In addition, if $ f_i' > 0 $ then $ \widetilde f' > 0 $, which directly implies $ f'=f_\varepsilon' > 0 $ as $ f_\varepsilon \to \widetilde f $ in $C^1([a_1, b_2])$, proving that the resulting warped product is foliated by mean convex CMC spheres.
	\end{proof}
	
	In order to apply Lemma \ref{lemma-gluing} to  smoothly glue an AdS-Schwarzschild manifold to the collars that we will construct in Section \ref{SHzero} ($H=0$ case) and Section \ref{SHnonzero} ($H>0$ case), we need to slightly bend a piece of the AdS-Schwarzschild metric to increase the scalar curvature in order to make room for smoothing out the gluing zone. This is the analogue of Lemma 2.3 in \cite{M-S}, and the proof is very similar.
	
\begin{lemma}[Bending Lemma]\label{lemma-bending}
Let $\gamma=ds^2+u(s)^2g_*$ be a metric on a cylinder $[a,\infty)\times{\bS^n}$, where $u$ is a smooth positive function and $g_*$ is the standard round metric on $\bS^n$. Assume that the scalar curvature of $\gamma$ satisfies $R(\gamma)\geq\tau$ for a fixed $\tau\in\R$. Then for any $s_0>a$ with $u'(s_0)>0$, there is a $\delta > 0$ and a metric $\widetilde{\gamma} = ds^2 + \widetilde u(s)^2 g_*$, such that $\widetilde \gamma = \gamma$ on $[s_0,\infty)\times \bS^n$ and $R(\widetilde{\gamma}\,)>\tau$ on $[s_0-\delta,s_0)\times \bS^n$. If in addition, $u(s_0) > c$ for some positive constant $c$ and $u''(s_0) > 0$, then $\widetilde u(s_0 - \delta) > c$ and $\widetilde u'(s_0 - \delta) < u'(s_0)$.
\end{lemma}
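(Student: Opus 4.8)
The plan is to leave $u$ completely unchanged on $[s_0,\infty)$ and to subtract a small, carefully convex bump from $u$ on a short interval $[s_0-\delta,s_0]$, using the curvature criterion \eqref{eq-fRcond}. Recall that a warped product $ds^2+f^2g_*$ with smooth positive $f$ has $R>\tau$ precisely when $f''<\Omega[f]$, where $\Omega[f]=\tfrac{n-1}{2f}\bigl(1-(f')^2-\tfrac{\tau f^2}{n(n-1)}\bigr)$, and (by the same algebraic identity) $R\ge\tau$ precisely when $f''\le\Omega[f]$. Thus the hypothesis $R(\gamma)\ge\tau$ is exactly the statement $u''\le\Omega[u]$ on $[a,\infty)$; since this inequality need not be strict near $s_0$, the goal is to perturb $u$ so as to force strictness on $[s_0-\delta,s_0)$ while keeping smooth matching to $u$ at $s_0$.

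I would take the perturbation $h_\varepsilon(s):=\varepsilon\,e^{-1/(s_0-s)}$ for $s<s_0$ and $h_\varepsilon(s):=0$ for $s\ge s_0$. This is $C^\infty$ on $\mathbb R$ and vanishes to infinite order at $s_0$, so setting $\widetilde u:=u-h_\varepsilon$ on $[s_0-\delta,s_0]$ and $\widetilde u:=u$ on $[s_0,\infty)$ defines a smooth function on $[s_0-\delta,\infty)$, which is positive for $\varepsilon$ small (as $0\le h_\varepsilon\le\varepsilon$ and $u$ is bounded below near $s_0$), and $\widetilde\gamma:=ds^2+\widetilde u^2g_*$ equals $\gamma$ on $[s_0,\infty)\times\bS^n$ by construction. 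Writing $t:=s_0-s$, a direct computation gives $h_\varepsilon=\varepsilon e^{-1/t}$, $h_\varepsilon'=-\varepsilon e^{-1/t}t^{-2}$ and $h_\varepsilon''=\varepsilon e^{-1/t}t^{-4}(1-2t)$, so $h_\varepsilon$ is strictly convex on $[s_0-\delta,s_0)$ once $\delta<1/2$.

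For the curvature estimate, note that $\Omega[f]$, viewed as a function of $(f,f')\in(0,\infty)\times\mathbb R$, is smooth, hence Lipschitz with some constant $L$ on a fixed compact region of $(f,f')$-values containing the graphs of $(u,u')$ and $(\widetilde u,\widetilde u')$ over $[s_0-\delta,s_0]$ for all small $\varepsilon$. Using $\widetilde u''=u''-h_\varepsilon''\le\Omega[u]-h_\varepsilon''$ and $\Omega[u]-u''\ge0$, together with the common factor $\varepsilon e^{-1/t}$ in $h_\varepsilon,h_\varepsilon',h_\varepsilon''$, I get on $[s_0-\delta,s_0)$
\[
\Omega[\widetilde u]-\widetilde u''\;\ge\;\bigl(\Omega[u]-u''\bigr)+h_\varepsilon''-L\bigl(|h_\varepsilon|+|h_\varepsilon'|\bigr)\;\ge\;\varepsilon e^{-1/t}\Bigl[\tfrac{1-2t}{t^4}-L\bigl(1+\tfrac1{t^2}\bigr)\Bigr].
\]
Since $t^{-4}$ dominates $1+t^{-2}$ as $t\to0^+$, there is a $\delta\in(0,1/2)$ (depending only on $L$) making the bracket positive for all $t\in(0,\delta]$; hence $\widetilde u''<\Omega[\widetilde u]$, i.e. $R(\widetilde\gamma)>\tau$, on $[s_0-\delta,s_0)\times\bS^n$.

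Finally, the ``in addition'' clause follows from the smallness of the perturbation. Shrinking $\delta$ further (by continuity of $u,u',u''$ and the hypotheses $u(s_0)>c$, $u''(s_0)>0$, $u'(s_0)>0$), I may assume $u>c$, $u''>0$ and $u'>0$ on $[s_0-\delta,s_0]$; in particular $u(s_0-\delta)>c$ and $u'(s_0-\delta)<u'(s_0)$ each with a definite margin. Since $|h_\varepsilon(s_0-\delta)|\le\varepsilon$ and $|h_\varepsilon'(s_0-\delta)|\le\varepsilon e^{-1/\delta}\delta^{-2}$, choosing $\varepsilon$ small enough preserves both: $\widetilde u(s_0-\delta)>c$ and $\widetilde u'(s_0-\delta)=u'(s_0-\delta)+\varepsilon e^{-1/\delta}\delta^{-2}<u'(s_0)$; moreover $\widetilde u'=u'-h_\varepsilon'>u'>0$ on $[s_0-\delta,s_0]$, which is where $u'(s_0)>0$ is used (it keeps the bent piece compatible with the foliation clause of Lemma \ref{lemma-gluing}). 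The one genuinely delicate point is the choice of bump: it must vanish to infinite order at $s_0$ so that $\widetilde\gamma$ is a \emph{smooth} metric there, yet have $h_\varepsilon''$ large enough near $s_0$ to beat the first-order change in $\Omega$; the profile $e^{-1/(s_0-s)}$ achieves both, since after cancelling the common $e^{-1/t}$ the competition reduces to $t^{-4}$ against $t^{-2}$.
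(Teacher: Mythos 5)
Your proof is correct, but it takes a genuinely different route from the paper's. The paper achieves the bending by \emph{reparametrizing} the profile: it sets $\widetilde u(s) := u(\sigma(s))$, where $\sigma$ is a strictly increasing $C^\infty$ function with $\sigma(s)=s$ for $s\ge s_0$ and $\dot\sigma(s) = 1 + e^{-1/(s-s_0)^2}>1$ for $s<s_0$. Substituting into the warped-product scalar curvature formula, the original $R(\gamma)\ge\tau$ appears multiplied by $\dot\sigma^2$, and after dividing out the common factor $e^{-1/(s-s_0)^2}$ (coming from $\dot\sigma^2-1$ and from $\ddot\sigma$) the $-2nu u'\ddot\sigma$ term contributes $4nu u'/(s_0-s)^3$, which dominates the $O(1)$ remainder as $s\to s_0^-$. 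You instead perturb the \emph{profile function itself}, subtracting the flat-at-$s_0$ convex bump $h_\varepsilon=\varepsilon e^{-1/(s_0-s)}$; after cancelling the common $\varepsilon e^{-1/t}$ the competition is $t^{-4}$ against $L(1+t^{-2})$, so again a power blow-up beats lower-order errors. The underlying mechanism — factor out the flat exponential and compare powers of $1/(s_0-s)$ — is thus identical, and both are constructive and local. The paper's version is slightly cleaner in that the original scalar curvature appears verbatim in the transformed formula, so no Lipschitz estimate on $\Omega[\,\cdot\,]$ is needed; it also keeps positivity and monotonicity of $\widetilde u$ automatic (a reparametrization of a positive increasing $u$ remains positive and increasing). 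Your version needs the Lipschitz bound on $\Omega$ over a compact $(f,f')$-region and a separate check that $\widetilde u>0$, but it is more modular — the key fact used is precisely that $R\ge\tau\Leftrightarrow u''\le\Omega[u]$ plus the standard flat-bump trick, and it handles the ``in addition'' clause by the same smallness-of-$\varepsilon$ argument. One small presentational note: you should fix a preliminary $\delta_0$ (say $\delta_0=1/4$) to pin down the compact region and hence $L$, and only afterwards shrink to the final $\delta\le\delta_0$; as written the dependence of $L$ on $\delta$ reads as circular, though the fix is immediate since shrinking $\delta$ only shrinks the region.
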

\begin{proof}
First, from \eqref{collar-general} and \eqref{scalar-collar-general} with $v \equiv 1$ and $E=u$, we know that the scalar curvature of a metric of the form
\begin{align*}
\gamma=ds^2+u(s)^2g_*
\end{align*}
is well-known (see, for example, \cite{CM}) to be given by
\begin{align}\label{eq-R-formula-simple}
R(\gamma)=\frac{n}{u^2}\( (n-1)-(n-1)(u')^2-2uu''\).
\end{align}

Notice that if $R(\gamma) > \tau$ already holds at $s=s_0$, nothing needs to be done and we can set $\widetilde\gamma = \gamma$ and choose $\delta \leq s_0 -a$ such that $R(\gamma) > \tau$ in $[s_0 - \delta, s_0]$. Otherwise, we seek to bend the metric near the cross-section $s=s_0$. We do this with a strictly increasing reparametrisation function $\s$ such that on some interval $[s_0-\delta,s_0)$, the metric
		\begin{align*}
		\widetilde{\gamma}\definedas ds^2+u(\s(s))^2g_*
		\end{align*}
		has scalar curvature $\widetilde{R}\definedas R(\widetilde{\gamma}\,) > \tau$, given via \eqref{eq-R-formula-simple} by	
	\begin{align*}
		\widetilde{R}=\frac{n}{u(\s)^{2}}\( (n-1)-(n-1)\(\frac{d}{d s}u(\s)\)^2-2u(\s)\frac{d^2}{ds^2}u(\s) \).
		\end{align*}	
Denoting $s$-derivatives by $\dot{}$ and $\s$-derivatives by $'$, expanding this gives
\begin{align*}
\widetilde{R}=\frac{n}{u(\s)^{2}}\( (n-1)-(n-1)\(u'(\s)\dot{\s}\)^2-2u(\s)\(u''(\s)\dot{\s}^2+u'(\s)\ddot{\s}  \) \) ,
\end{align*}
which can be rearranged to
\begin{align*}
\widetilde{R}&=\frac{n}{u(\s)^{2}}\( (n-1)-(n-1)\dot{\s}^2-2u(\s)u'(\s)\ddot{\s}\) \\ &\qquad +\frac{n \dot{\s}^2}{u(\s)^{2}}\((n-1)-(n-1)u'(\s)^2-2u(\s) u''(\s)\).
\end{align*}

Now note that the second group of terms is exactly the scalar curvature $R(\gamma)$ of the warped product metric $\gamma$ we started with, multiplied by $\dot{\s}^2$. As $R(\gamma)\geq\tau$ by assumption, we have
\begin{align*}
\widetilde{R}\geq u(\s)^{-2}\( n(n-1)-n(n-1)\dot{\s}^2-2nu(\s)u'(\s)\ddot{\s}\)+(\dot{\s}^2-1)\tau +\tau.
\end{align*}

That is, in order to achieve $\widetilde{R}=R(\widetilde{\gamma}\,)>\tau$ on $[s_{0}-\delta,s_{0})\times \bS^{n}$, we must choose $\s$ so that
\begin{align}\label{eq-sigmacond1}
		n(n-1)-n(n-1)\dot{\s}^2-2nu(\s)u'(\s)\ddot{\s}+u(\s)^2(\dot{\s}^2-1)\tau>0
		\end{align}
on $[s_{0}-\delta,s_{0})$. For this, it is sufficient to use the same function $\s$ as is used for bending the Schwarzschild manifold in the asymptotically flat case \cite{M-S,CCMM}. Define the translated bump function $\theta(s)\definedas1+\exp(-(s-s_0)^{-2})$, such that $\theta(s_0)=1$. For $\delta>0$ such that $a<s_{0}-\delta$ and $s\in[s_{0}-\delta,s_{0})$, define
\begin{align*}
\s(s)\definedas\int_{s_0-\delta}^{s}\theta(t)\,dt+C_\delta,
\end{align*}
where $C_\delta\definedas s_{0}-\int_{s_{0}-\delta}^{s_0}\theta(t)dt$. This choice of constant ensures that $\sigma$ can be smoothly extended to $\sigma(s)=s$ for $s\geq s_0$. With this choice of $\s$, \eqref{eq-sigmacond1} becomes		
\begin{align}\label{eq:s0}
(u(\s(s))^2\tau-n(n-1))\(2+e^{-\frac{1}{(s-s_0)^2}}\)+\frac{4nu(\s(s))u'(\s(s))}{(s_0-s)^3}>0
\end{align}
for all $s\in[s_{0}-\delta,s_{0})$.
		
We see that \eqref{eq:s0} is satisfied provided $\delta$ is taken sufficiently small, as the last term on the left-hand side is positive and dominates as $\delta$ goes to zero.
		
If in addition $u(s_0)> c > 0$, then $\widetilde{u}$ defined by $\widetilde{u}(s)\definedas u(\s(s))$ clearly satisfies $\widetilde u(s_0 - \delta) > c$, taking $\delta$ smaller if necessary. For the last assertion in the lemma, note that 
		\begin{align*}
		\widetilde u'' = \ddt{^2}{s^2} u(\s)  = u''(\s) \dot\s^2 + u'(\s)\ddot\s
		\end{align*}
		on $[s_0 - \delta, s_0]$, which can also be made positive by making $\delta$  smaller if $u''(\s(s_0)) > 0$. Then, in this case, $\widetilde u'$ is increasing on $[s_0-\delta,s_0]$, which implies $\widetilde u'(s_0 - \delta) < \widetilde u'(s_0) = u'(s_0) $.
	\end{proof}

	We now combine the above lemma with the concrete profile of AdS-Schwarz\-schild manifolds. As we do not intend to discuss the hyperbolic Hawking mass in higher dimensions, we will from now on restrict our attention to $n=2$.
	
	\begin{prop}\label{prop-gluing2}
		Let $f$ be a smooth positive function on an interval $[a,b]$ with $f'(b)>0$, $g_*$ be the standard metric on $\bS^2$, and define the metric $\gamma_f\definedas ds^2+f(s)^2g_*$ on $[a,b]\times \bS^2$. Suppose\\[-1.5ex]
		\begin{enumerate}[(i)]\itemsep1.5ex
			\item $\gamma_f$ has scalar curvature $R(\gamma_f)>-6$,
			\item $\S_b= \{b\}\times \bS^2$ has positive mean curvature, and
			\item $\mhh(\S_b)\geq -f(b)^3$.\\[-1.5ex]
		\end{enumerate}
		Then for any $m_e> \mhh(\S_b)$, there exists a rotationally symmetric, asymptotically hyperbolic $3$-manifold $(M,\gamma)$ with inner boundary $\partial M$ and scalar curvature $R(\gamma)\geq-6$, such that\\[-1.5ex]
		\begin{enumerate}[(I)]\itemsep1.5ex
			\item $(M,\gamma)$ is isometric to the spatial AdS-Schwarzschild manifold with mass $m_e$ outside of a neighbourhood of the inner boundary and
			\item there is a neighbourhood of the inner boundary that is isometric to $\left( [a,\frac{a+b}{2}]\times \bS^2,\gamma_f\right)$.\\[-1.5ex]
		\end{enumerate}
		In addition, if $f'>0$ on $[a,b]$ then $(M,\gamma)$ is foliated by mean convex CMC spheres.
	\end{prop}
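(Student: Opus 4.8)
The plan is to glue $\gamma_f$ to a piece of the spatial AdS-Schwarzschild manifold of mass $m_e$ by means of Lemma \ref{lemma-gluing} (with $n=2$ and $\tau=-6$), after first \emph{bending} that piece via Lemma \ref{lemma-bending} so that its scalar curvature becomes strictly larger than $-6$ there; this bending step is forced on us because $R(g_{m_e,1})\equiv-6$ saturates the admissible range and thus cannot be inserted into the gluing lemma directly. Writing $g_{m_e,1}=ds^2+u(s)^2g_*$ with $u=u_{m_e,1}$, one has $(u')^2=1+u^2-2m_e/u$ and, as remarked in Section \ref{SS-adsschwarzschild}, the hyperbolic Hawking mass of every centred sphere equals $m_e$. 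I would begin by evaluating \eqref{eq-AHHdef} on $\S_b$ --- a round sphere of radius $f(b)$ with mean curvature $H(b)=2f'(b)/f(b)$ by \eqref{eq-Hs} --- which gives the identity $f'(b)^2=1+f(b)^2-2\mhh(\S_b)/f(b)$; in particular assumption (iii) is precisely $f'(b)^2\le 1+3f(b)^2$, which together with (i)--(ii) furnishes hypothesis (iii) of Lemma \ref{lemma-gluing} for $f_1:=f$, $b_1:=b$ (the borderline case $\mhh(\S_b)=-f(b)^3$ requiring a little extra care).

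The crucial step is to choose the cross-section of $g_{m_e,1}$ at which to cut. Let $r_+=r_+(m_e,1)$. Comparing the identity above with the relation $(u')^2=1+r^2-2m_e/r$ along the centred sphere of radius $r$ and using $m_e>\mhh(\S_b)$, one sees that $\inf_{r>\max\{r_+,f(b)\}}u'(r)<f'(b)$: when $f(b)>r_+$ this is immediate from $1+f(b)^2-2m_e/f(b)<f'(b)^2$; when $f(b)\le r_+$ one has $u'(r_+)=0<f'(b)$; and the remaining case $m_e<0$ is handled similarly via an elementary estimate for $r^2+2|m_e|/r$. Hence I can pick a radius $r_0^*>\max\{f(b),r_+\}$ with $0<u'(r_0^*)<f'(b)$, and moreover with $u''(r_0^*)=r_0^*+m_e/(r_0^*)^2>0$. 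Placing this sphere at the parameter value $s_0>0$ and applying Lemma \ref{lemma-bending} with $c:=f(b)$ then produces a $\delta>0$ and a smooth bent profile $\widetilde u$ on $[s_0-\delta,\infty)$ which coincides with $u$ on $[s_0,\infty)$, satisfies $R(\widetilde\gamma)>-6$ on $[s_0-\delta,s_0)$, and obeys $\widetilde u(s_0-\delta)>f(b)$ together with $0<\widetilde u'(s_0-\delta)<f'(b)$.

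Next I would apply Lemma \ref{lemma-gluing} with $f_1=f$ on $[a,b]$ and $f_2:=\widetilde u$ restricted to the \emph{closed} subinterval $[s_0-\delta,s_0-\tfrac{\delta}{2}]\subset[s_0-\delta,s_0)$, on which $R(\widetilde\gamma)$ is bounded strictly above $-6$; it is essential to feed in only this sub-piece, since the untouched AdS-Schwarzschild region has $R\equiv-6$ and would violate hypothesis (i). Hypotheses (ii)--(iv) are exactly the inequalities arranged above, and $f_1',f_2'>0$. The lemma then yields (after the prescribed translations of the intervals) a smooth positive profile equal to $f$ on $[a,\tfrac{a+b}{2}]$ and equal to $\widetilde u$ near the $f_2$-endpoint, whose warped product has $R>-6$. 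Re-attaching $\widetilde u$ on the remaining part of $[s_0-\delta,s_0]$ and then $u$ on $[s_0,\infty)$ --- every junction being smooth by construction of $\widetilde u$ --- produces $(M,\gamma)$: it has $R(\gamma)\ge-6$ with equality only on the end, it is isometric to the mass-$m_e$ spatial AdS-Schwarzschild manifold outside a compact neighbourhood of the inner boundary $\{a\}\times\bS^2$ and is hence asymptotically hyperbolic of mass $m_e$, which gives (I), while (II) holds by the first of the two displayed properties of the glued profile. Finally, if $f'>0$ on $[a,b]$ then the glued profile, $\widetilde u$ and $u$ all have positive derivative, so every centred sphere is a mean-convex CMC sphere, which is the last assertion.

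The step I expect to be the main obstacle is the selection of the cutting radius $r_0^*$ realising the slope condition (iv) of Lemma \ref{lemma-gluing}: this is the only place where the hypothesis $m_e>\mhh(\S_b)$ is used --- through the monotone dependence of $1+r^2-2m_e/r$ on $m_e$ together with the Hawking-mass identity --- and it has to be reconciled with the bookkeeping forced by the fact that spatial AdS-Schwarzschild sits exactly on the boundary $R\equiv-6$ of the admissible class, so that only a bent sub-piece of it can be handed to the gluing lemma.
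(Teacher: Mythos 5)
Your proposal is correct and follows essentially the same route as the paper: verify hypotheses (ii)--(iv) of Lemma \ref{lemma-gluing} via the identity $f'(b)^2=1+f(b)^2-2\mhh(\S_b)/f(b)$ and the strict inequality $m_e>\mhh(\S_b)$, bend the AdS-Schwarzschild profile with Lemma \ref{lemma-bending} to obtain $R>-6$ strictly on the piece handed to the gluing lemma, and then glue. Your extra care about feeding only the bent compact sub-piece into the gluing lemma, and your flagging of the borderline case $\mhh(\S_b)=-f(b)^3$ (which the paper's proof tacitly treats as strict), are both sound refinements of the same argument.
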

	\begin{proof}
		First, recall there exists a coordinate chart in which the AdS-Schwarzschild metric of mass $m$ can be written as
		\begin{align*}
		{\gamma}_{m}\definedas ds^2+u_m(s)^2g_*,
		\end{align*}
		as in \eqref{sch-um} for $s\in[0,\infty)$, where $u_m>0$ satisfies\\[-1.5ex]
		\begin{itemize}\itemsep1.5ex
			\item $u_m(0)=r_+$ is the largest root of $r+r^3-2m$ if $m>0$,
			\item $u_m(0)=0$ if $m\leq0$,
			\item $u'_m=\sqrt{1+u_m^2-\frac{2m}{u_m}}>0$, on $\R^+$, and
			\item $u''_m=u_m+\frac{m}{u_m^2}$, on $\R^+$.\\[-1.5ex]
		\end{itemize}
\vspace{1ex}
By  \eqref{Hawking-level}, the hyperbolic Hawking mass of $\S_b=\{b\}\times\S$ is given by
		\begin{align}\label{eq-hawkingmass}
		m_{*}\definedas\mhh(\S_b)=\frac{f(b)}{2}\( 1+f(b)^2-f'(b)^2 \).
		\end{align}
		
		Now fix $m_e>\mhh(\S_b)$. We first consider the case when $m_e > 0$. In this case, $u_{m_e}'$ attains all values in $[0,\infty)$ and we can choose $s_0 > 0$ such that $u_{m_e}'(s_0) = f'(b) > 0$. From this, \eqref{eq-hawkingmass} and the expression for $u_{m_e}'$, we have		
		\begin{align*}
		m_e>\frac{f(b)}{2}\( 1+f(b)^2-(f'(b))^2 \)=\frac{f(b)}{2}(f(b)^2-u_{m_e}(s_0)^2)+\frac{f(b)}{u_{m_e}(s_0)}\,m_e,
		\end{align*}
		from which we have
		\begin{align*}
		& m_e\(1-\frac{f(b)}{u_{m_e}(s_0)}\)>\frac{f(b)}{2}\(f(b)^2-u_{m_e}(s_0)^2\)\\
		\Leftrightarrow\;& m_e(u_{m_e}(s_0)-f(b))>\frac{u_{m_e}(s_0)f(b)}{2}(f(b)-u_{m_e}(s_0))(f(b)+u_{m_e}(s_0))\\
		\Leftrightarrow\;& (u_{m_e}(s_0)-f(b))\( m_e+\frac{u_{m_e}(s_0)f(b)}{2}(f(b)+u_{m_e}(s_0)) \)>0,
		\end{align*}
and therefore $0<f(b)<u_{m_e}(s_0)$. In particular, we have established that the hypotheses (ii) and (iv) of Lemma~\ref{lemma-gluing} are met for $f_1=f$ on $[a,b]$ and $f_2=u_{m_e}$ on $[s_0,\infty)$.
		
		The remaining case to consider is when $m_e \leq 0$. To apply Lemma \ref{lemma-gluing} to $f_1$ and $f_2$ as given above, we require $u_{m_e}(s_0)>f(b)$. As the range of $u_{m_e}$ is all of $\mathbb R^+_0$, we are free to choose $s_{\veps_1}$ such that $u_{m_e}(s_{\veps_1})=f(b)+\veps_1$, for any $\veps_1>0$. We then have
		\begin{align*}
		u'_{m_e}(s_{\veps_1})^2=1+f(b)^2-\frac{2m_{e}}{f(b)}+O(\veps_1).
		\end{align*}
		Letting $\mu\definedas\frac{2m_{e}-2m_*}{f(b)}>0$, we have
		\begin{align*}
		u'_{m_e}(s_{\veps_1})^2&=1+f(b)^2-\frac{2m_*}{f(b)}-\mu+O(\veps_1)\\
		&=f'(b)^2-\mu+O(\veps_1),
		\end{align*}
and therefore for some $s_0=s_{\veps_1}$, choosing $\veps_1$ sufficiently small, we have $0 < u'_{m_e}(s_0)<f'(b)$ and $u_{m_e}(s_0) > f(b)$, recalling that $\mu>0$, which establishes hypotheses (ii) and (iv) of Lemma~\ref{lemma-gluing}. In order to show hypothesis (iii) of Lemma \ref{lemma-gluing}, we only need to show that $f'(b) < \sqrt{1 + 3f(b)^2}$. This trivially follows from \eqref{eq-hawkingmass} and the assumption that $m_* = \mhh(\S_b) > -f(b)^3$.
		
		To complete the proof, we simply observe that the AdS-Schwarzschild exterior can be bent slightly near $s=s_0$ by applying Lemma \ref{lemma-bending}, so that it has positive scalar curvature near $s=s_0$. As this can be done while preserving the other hypotheses of Lemma \ref{lemma-gluing}, we can directly apply Lemma \ref{lemma-bending} to complete the proof.\\ 
		
		If $f'(s)>0$ everywhere on $[a,b]$ then it is clear from \eqref{eq-Hs} that each $\Sigma_s$ has constant positive mean curvature.
	\end{proof}
	
	In order to prove Theorem \ref{thm-main}, we next construct appropriate collars so that we may apply Proposition \ref{prop-gluing2} to construct admissible extensions of given Bartnik data. More precisely, Proposition \ref{prop-gluing2} then asserts that $\gamma_f$ can be smoothly glued to an AdS-Schwarzschild manifold of any mass $m_e > \mhh(\S_b)$, preserving the lower bound on the scalar curvature.

	\section{Extensions with minimal boundary}\label{SHzero}
	In \cite{M-S}, Mantoulidis and Schoen constructed asymptotically flat extensions of Bartnik data with $H\equiv0$, with non-negative scalar curvature. Their construction works for any metric $g$ on $\S$ such that the first eigenvalue of $-\Lap_g + K(g)$ is strictly positive, where $K(g)$ denotes the Gaussian curvature of $(\S,g)$. This condition arises naturally in the theory of minimal surfaces in the following way. A compact minimal surface $\S$ in a Riemannian manifold $(M,\gamma)$ is said to be \emph{strictly stable} if
	\begin{align}\label{stab-eq}
	\int_{\S} |\grad \varphi|^2 > \int_{\S} ( |A|^2 + \Ric(\nu,\nu)) \varphi^2
	\end{align}
	for all smooth $\varphi\not\equiv0$ on $\S$. Here $\nu$ and $A$ denote the outward unit normal and the second fundamental form of $\Sigma$ in $M$, respectively, and $\Ric$ is the Ricci tensor of $(M,\gamma)$. Using the Gauss equation, one can rewrite the right-hand side of \eqref{stab-eq} and then the strict stability condition implies
	\begin{align*}
	\int_{\S} -\varphi \Lap_g \varphi  + K(g) \varphi^2 > \int_{\S}  \dfrac{R(\gamma) + |A|^2}{2} \varphi^2  \geq  \int_{\S} \dfrac{R(\gamma)}{2} \varphi^2
	\end{align*}	
	for all smooth $\varphi\not\equiv0$ on $\S$. So if $(M,\gamma)$ has non-negative scalar curvature, the condition of strict stability implies that the first eigenvalue of $-\Lap_g + K(g)$ is strictly positive.
	
	In our setting, $(M,\gamma)$ is an asymptotically hyperbolic $3$-manifold with $R(\gamma) \geq -6$. Strict stability therefore implies that the first eigenvalue of $-\Lap_g + K(g)$ is strictly greater than $-3$.
	
	It would then be natural to consider the set of metrics described as
	\begin{align*}
	\M= \{ g \,\, \textmd{metric on $\S$} \, : \, \lambda_1(-\Lap_g + K(g)) > -3 \},
	\end{align*}
	where $\lambda_1 = \lambda_1 (-\Lap_{g} + K(g))$ denotes the first eigenvalue of $-\Lap_{g} + K(g)$.  However, the path of metrics used by Mantoulidis and Schoen does not seem to be compatible with this class. Furthermore, the authors are not aware of any geometric flow known to preserve a (negative) lower bound for the smallest eigenvalue of the operator $-\Lap_{g} + K(g)$. Yet, if $\M$ is path-connected, the proofs of Theorem \ref{thm-main} and Corollary \ref{coro1} would work with the weaker hypothesis  $\lambda_1(-\Delta_{g_o}+K(g_o))>-3$. To see this, take any path in $\M$ connecting the given metric $g_{o}$ to the round metric of area $4\pi$ and repeat the procedure used in \cite{M-S}.

Therefore, instead of considering $\M$, we consider the same set $\M^+$ of metrics considered in \cite{M-S}. That is, assume that $g_o \in \M^+$, where
\begin{align*}
	\M^+= \{ g \,\, \textmd{metric on $\S$} \, : \, \lambda_1(-\Lap_g + K(g)) > 0 \}.
	\end{align*}
This set is path-connected as can be seen using the path $g(t)$ given by the Uniformisation Theorem \cite[Proposition 1]{M-S} which connects a given metric $g(0)=g_{o}$ to the round metric $g(1)=g_{*}$ of area $4\pi$. In addition, this path can be modified using \cite[Lemma 1.2]{M-S} so that 
\begin{enumerate}\itemsep1.5ex
\item $g'(t) = 0$ on $[\theta,1]$, for some $0<\theta<1$,
\item $\tr_{g(t)}{g}'(t)=0$, and
\item $g(1)$ is round.
\end{enumerate}
In particular, $g(t)$ has the same area as $g(0)=g_{o}$ for all $t$, as (2) implies that the area form is preserved. 

\vspace{2ex}

As in \cite{M-X} (see also \cite{CCMM}), we define the following scale-invariant constants $\alpha$ and $\beta$ associated with a path of metrics $\{g(t)\}_{t \in [0,1]}$:
\begin{align}
\begin{split}\label{alpha-beta-def}
r_o &\definedas\sqrt{\dfrac{| \S|_{g_o}}{4\pi}},\\
\a &\definedas  \dfrac{1}{4} \max_{[0,1]\times\S} |g'(\cdot)|^2_{g(\cdot)},\\
\beta &\definedas \min_{[0,1]\times\S} r_o^2\, K(g(\cdot)).
\end{split}
\end{align}

Note that the quantities $\a$ and $\b$ depend on the chosen path $\{g(t)\}_{t \in [0,1]}$, and give a measure of how far the metric $g_o$ is from being round. Specifically, Miao and Xie show that if $g_o$ is sufficiently close to $g_*$ in the $C^{2,\tau}$ topology then we can fix the path such that $\alpha<C\|g_o-g_*\|_{C^{0,\tau}}$ (see Proposition 4.1 of \cite{M-X}). Note that in the case considered by Miao and Xie, it is assumed that $K(g_o)>0$. However, this is guaranteed here by the $C^{2,\tau}$-closeness of $g_o$ to $g_*$.

Note that by condition (1) above, $g(t) = r_o^2 g_*$ for all $\theta \leq  t \leq 1$. While $\alpha$ appears in the proof of Theorem \ref{thm-main}, $\beta$ will only come into play in Section \ref{SHnonzero}. 

Working with this modified path, we now proceed to prove the following theorem.
\begin{thm}\label{thm-main}
Let $(\S \cong \bS^2,g_o,H_o=0)$ be  Bartnik data satisfying $\lambda_1(-\Delta_{g_o}+K(g_o))>0$, where $\lambda_1(-\Delta_{g_o}+K(g_o))$ denotes the first eigenvalue of the operator $-\Delta_{g_o}+K(g_o)$ on $\S$ and $K(g_o)$ denotes the Gaussian curvature of $g_o$. Then for any 
\begin{align*}
m > \mhh(\S,g_o,H_o=0)=\dfrac{1}{2}\( \(\dfrac{|\S|_{g_o}}{4 \pi}\)^{1/2} + \(\dfrac{|\S|_{g_o}}{4\pi}\)^{3/2} \), 
\end{align*}
there is an asymptotically hyperbolic Riemannian manifold $(M,\gamma)$ with $R(\gamma) \geq -6$  such that\\[-2.125ex]
\begin{enumerate}[(i)]\itemsep1.5ex
\item the boundary $\partial M$ is minimal and isometric to $(\S,g_o)$,
\item outside a compact set, $M$ coincides with the spatial AdS-Schwarzschild manifold of mass~$m$, and
\item $M$ is foliated by mean convex spheres that eventually coincide with the coordinate spheres in the spatial AdS-Schwarzschild manifold.\\[-1.5ex]
\end{enumerate}
\end{thm}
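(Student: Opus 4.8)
The plan is to follow the Mantoulidis--Schoen strategy, adapted to the asymptotically hyperbolic setting via Proposition~\ref{prop-gluing2}. The construction proceeds in two stages: first build a collar extension of $(\S,g_o,H_o=0)$ with minimal inner boundary and a round outer boundary whose hyperbolic Hawking mass is close to $\mhh(\S,g_o,0)$, and then invoke Proposition~\ref{prop-gluing2} to smoothly glue that outer boundary to a spatial AdS-Schwarzschild manifold of the prescribed mass $m$, preserving $R\geq-6$. Since $\lambda_1(-\Delta_{g_o}+K(g_o))>0$ means $g_o\in\M^+$, the path-connectedness of $\M^+$ via the Uniformisation Theorem (as recalled above) supplies a smooth path $\{g(t)\}_{t\in[0,1]}$ with $g(0)=g_o$, $g(1)=r_o^2 g_*$, $\tr_{g(t)}g'(t)=0$, and $g'(t)=0$ on $[\theta,1]$. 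The scale-invariant constant $\alpha$ from \eqref{alpha-beta-def} controls how far $g_o$ is from round and will govern the error in the Hawking mass.

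First I would fix the collar $[0,1]\times\S$ with metric $\gamma=v^2\,dt^2+E(t)^2 g(t)$ as in \eqref{collar-general}, with $v$ a positive constant on each $\S_t$ (in fact I expect $v$ to depend only on $t$, or be taken as a constant), and with $E(0)=1$, $E'>0$. Formula \eqref{eq-Hs} shows the inner boundary $\S_0$ is minimal precisely when $E'(0)=0$; but since we also need $E'>0$ to foliate by mean convex spheres, the standard device is to take $E'(0)=0$ with $E''(0)>0$ so that $\S_0$ is minimal while $\S_t$ is mean convex for $t>0$. The key estimate is the scalar curvature formula \eqref{scalar-collar-general}: the dangerous term is $-\tfrac14|g'(t)|^2_{g(t)}$, bounded below by $-\alpha$, and the term $-\frac{2E'^2+4EE''}{E^2}$. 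One chooses $E$ growing slowly enough (for instance $E(t)=1+\epsilon\, \xi(t)$ with $\xi$ a fixed profile vanishing to first order at $0$, $\epsilon$ small) and $v$ chosen to absorb the curvature of $E(t)^2 g(t)$ via the operator term $2v^{-1}(-\Delta v+\tfrac12 R(E^2 g)v)$; since $g(t)$ stays in a compact family with controlled Gaussian curvature, one can solve (or choose $v$ such that) $R(\gamma)\geq -6$ throughout, exactly as in \cite{M-S,M-X,CCMM} but with the bound $-6$ in place of $0$. By \eqref{Hawking-level}, the outer boundary $\S_1$ is round with $\mhh(\S_1)=\frac{E(1)r_o}{2}(1-\frac{r_o^2 E'(1)^2}{v^2}+r_o^2 E(1)^2)$, which can be made as close to $\mhh(\S,g_o,0)=\frac{r_o}{2}(1+r_o^2)$ as desired by taking $\epsilon$ (hence $E(1)-1$ and $E'(1)$) small; note the hypotheses of Proposition~\ref{prop-gluing2} at $b=1$ — namely $R(\gamma_f)>-6$ near $\S_1$, $\S_1$ mean convex, and $\mhh(\S_1)\geq -f(1)^3$ — all hold for small $\epsilon$.

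Then, given $m>\mhh(\S,g_o,0)$, I would choose $\epsilon$ small enough that $\mhh(\S_1)<m$, restrict the collar to $[0,\tfrac12]$ (or reparametrise so that Proposition~\ref{prop-gluing2}(II) matches), and apply Proposition~\ref{prop-gluing2} with $f=E(\cdot)r_o$ on the round part $[\theta,1]$: this yields an asymptotically hyperbolic $(M,\gamma)$ with $R(\gamma)\geq-6$, isometric to spatial AdS-Schwarzschild of mass $m$ outside a compact set, coinciding with the collar near the inner boundary, and foliated by mean convex CMC spheres. The inner boundary is $(\S,g_o)$ and minimal by construction, giving (i)--(iii). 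I expect the main obstacle to be Step~2: verifying that $v$ can be chosen so that the operator term in \eqref{scalar-collar-general} dominates the negative contributions $-\frac14|g'|^2$ and $-\frac{2E'^2+4EE''}{E^2}$ uniformly, i.e. solving the relevant semilinear inequality for $v$ on each $\S_t$ while keeping $v$ close to a constant — this requires quantitative control on the first eigenvalue of $-\Delta_{g(t)}+\tfrac12 R(E^2 g(t))$ along the whole path, and is where the $C^0$-smallness of $\alpha$ and the compactness of the path family enter. The gluing in Step~3 is then essentially a black box once the collar is in hand.
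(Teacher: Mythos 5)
Your outline correctly identifies the Mantoulidis--Schoen strategy and the role of Proposition~\ref{prop-gluing2}, but there is a genuine gap at exactly the place you flag as ``Step 2,'' and it is compounded by a claim you make earlier that would actively break the argument. You propose taking $v$ to be a positive constant on each $\S_t$. With that choice the operator term in \eqref{scalar-collar-general} reduces to $R(E(t)^2 g(t)) = 2E(t)^{-2}K(g(t))$, which has the wrong sign unless $K(g(t)) > 0$ along the whole path. But the Uniformisation path used when $g_o\in\M^+$ only preserves $\lambda_1(-\Delta_{g(t)}+K(g(t)))>0$, not pointwise positivity of $K(g(t))$, so a $t$-dependent (but $x$-independent) $v$ leaves nothing to dominate the negative contributions $-\tfrac14|g'(t)|^2_{g(t)}$ and $-\tfrac{2E'^2+4EE''}{E^2}$. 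Taking $v$ constant on $\S_t$ is only viable under the alternative hypothesis $K(g_o)>-3$ (with the Ricci-flow path, cf.\ Remark~\ref{rem-K-3}), not under the hypothesis $\lambda_1>0$ that Theorem~\ref{thm-main} actually assumes.

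The missing idea is precisely the Mantoulidis--Schoen eigenfunction ansatz: one takes $v(t,\cdot) = A\,u(t,\cdot)$, where $u(t,\cdot)>0$ is the (smoothly chosen, $L^2$-normalised) first eigenfunction of $-\Delta_{g(t)}+K(g(t))$ with eigenvalue $\lambda(t)=\lambda_1(t)$. Then, using the $2$-dimensional conformal scaling $-\Delta_{E^2g}v+K(E^2 g)v = E^{-2}\lambda(t) v$, the operator term in \eqref{scalar-collar-general} produces exactly $2\lambda(t)u^{-2}A^{-2}E^{-2}A^2u^2 = 2\lambda(t)E^{-2}$ plus the $+6$ compensation, and after reorganising one gets
\[
R(\gamma_c)+6 \;>\; 2u^{-2}A^{-2}E^{-2}\Bigl[A^2\inf_{[0,1]\times\S}u^2(\lambda+3)-2-\alpha-2\sup_{[0,1]\times\S}\Bigl|\tfrac{\partial_t u}{u}\Bigr|\Bigr],
\]
which can be made positive by choosing $A$ large, since $\lambda(t)>0$ (hence $\lambda(t)+3>0$) and $u>0$ make the leading coefficient strictly positive. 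The only place where ``$v$ constant on $\S_t$'' is used is for the Hawking mass formula \eqref{Hawking-level}, and that holds automatically for $t\in[\theta,1]$ where $g(t)$ is round and $u(t,\cdot)$ is a constant. You gesture at ``quantitative control on the first eigenvalue along the path'' as the missing ingredient; the actual ingredient is not a bound on $\lambda_1$ but the exact eigenvalue identity built into the choice $v\propto u(t,\cdot)$. Once that is in place, your Step 3 (choose $\veps$ small so that $\mhh(\S_1)<m$, change variables, apply Proposition~\ref{prop-gluing2}) is correct and matches the paper.
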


\begin{remark}\label{rem-K-3}
Instead of assuming $\lambda_1(-\Delta_{g_o}+K(g_o))>0$, we could assume $K(g_o)>-3$ and obtain the same conclusion; this can be seen by following the proof of Theorem \ref{thm-main} using Ricci Flow in two dimensions, as described in Lemma \ref{Ricci2D} below, instead of the Uniformisation Theorem.
\end{remark}

\begin{proof}
Let $m > \mhh(\S,g_o,H_o=0)$, then there exists $\delta > 0$ such that 
\begin{align*}
\mhh(\S,g_o,0) < \mhh(\S,g_o,0) + \delta < m.
\end{align*}
Let $\{g(t)\}_{t\in[0,1]}$ be the path described above satisfying (1) and (2). In view of Proposition~\ref{prop-gluing2}, we want to construct an appropriate collar extension of $(\S,g_o,H_o\equiv0)$ that is rotationally symmetric close to the outer boundary, which explains the motivation for condition (1).

Recall that we suppress the dependence of $x\in\S$ of various quantities throughout. For each $t \in [0,1]$, let $u(t,\cdot) > 0$ on $\S$ denote a positive eigenfunction corresponding to the first eigenvalue $\lambda(t) = \lambda_1(t)$ of $-\Lap_{g(t)} + K(g(t))$; such choice of eigenfunctions can be made so that $u$ varies smoothly on $[0,1] \times \S$ and $u(t,\cdot)$ is normalised to have unit $L^2$ norm with respect to the area element $dV_{g(t)}$ (see \cite[Lemma A.1]{M-S}). Now let $0 < \veps <1$ and consider a metric of the form \eqref{collar-general}, with $E(t)\definedas(1+\veps t^2)^{1/2}$ and
 $v(t,\cdot)^2\definedas A^2u(t,\cdot)^2$, for some constant $A > 0$ to be determined.
  Then
\begin{align}\label{E-derivatives}
\begin{split}
E'(t) &= \dfrac{\veps t}{(1+\veps t^2)^{1/2}},\\
E''(t) &= \dfrac{\veps}{(1+\veps t^2)^{3/2}},
\end{split}
\end{align}
and the collar extension is given on $[0,1] \times \bS^2$ by
\begin{align}\label{collar-minimal}
\gamma_c =  A^2u(t,\cdot)^2 dt^2 + (1+\veps t^2)g(t).
\end{align}

Using \eqref{scalar-collar-general}, the scalar curvature of $\gamma_c$ as described in \eqref{collar-minimal} can be estimated as
\begin{align*}
&R( \gamma_c) + 6 \\
&\quad\quad= 2v(t,\cdot)^{ -1} \( - \Lap_{E(t)^2g(t)} v(t,\cdot) + K(E(t)^{2}g(t)) v(t,\cdot)  \)\\
&\quad\quad\quad + v(t,\cdot)^{ -2}\left[  \dfrac{-2 E'(t)^2 -4  E(t)E''(t)}{E(t)^2} - \dfrac{1}{4} |g'(t)|^2_{g(t) }  + 4 \dfrac{\pr_t v(t,\cdot)}{v(t,\cdot)} \dfrac{E'(t)}{E(t)}\right] +6 \\
&\quad\quad=2u(t,\cdot)^{-2}A^{ -2}E(t)^{-2}\\
&\quad\quad\quad\times\left[  A^2u(t,\cdot)^2( \lambda(t) + 3E(t)^2) -\veps - \dfrac{ \veps }{E(t)^2} - \dfrac{1}{8} |g'(t)|^2_{g(t) }E(t)^2 + 2\veps t \dfrac{\pr_t u(t,\cdot)}{u(t,\cdot)} \right] \\
&\quad\quad> 2u(t,\cdot)^{-2}A^{ -2}E(t)^{-2}\left[  A^2\inf\limits_{[0,1] \times \S} u^2( \lambda + 3) -2 - \a - 2 \sup\limits_{[0,1] \times \S}\left|\dfrac{\pr_t u}{u}\right| \right], 
\end{align*}
where we used $-\Lap_{E(t)^2g(t)}v(t,\cdot) + K_{E(t)^2g(t)}v(t,\cdot)=E(t)^{-2} \lambda(t) v(t,\cdot)$. 

Since $u >0$ and because $\lambda(t)> 0$ for all $t\in[0,1]$, it follows that $\inf\limits_{[0,1] \times \S} u^2\,( \lambda + 3) > 0$. Now pick $A>0$ such that
\begin{align} \label{lambda3}
A^2\inf\limits_{[0,1] \times \S} u^2\,(\lambda + 3) -2 - \a - 2 \sup\limits_{[0,1] \times \S}\left|\dfrac{\pr_t u}{u}\right|  > 0
\end{align}
so that $R(\gamma) > -6$.

Since $g(t)=r_o^2 g_*$ for all $t \in [\theta,1]$, we know that $u\equiv u(1)$ is a fixed positive constant on $\S_t = \{ t \} \times \S$ for all $t \in [\theta,1]$. Using \eqref{Hawking-level}, its Hawking mass is
\begin{align*}
\mhh(\S_t ) &=  \dfrac{E(t) r_o}{2}\(1-  \dfrac{r_o^2 E'(t)^2}{v(t,\cdot)^2}  + r_o^2 E(t)^2 \) \\
&=\dfrac{(1+\veps t^2)^{1/2} r_o}{2}\(1- \dfrac{r_o^2 \veps^2 t^2}{A^2 u(1)^2(1+\veps t^2)} + r_o^2 (1+\veps t^2)    \),
\end{align*}
and in particular, for any $0 < \veps <1$ such that $\dfrac{\veps}{1+\veps} < A^2 u(1)^2$, 
\begin{align}
\begin{split}\label{eq-Hawking-positiveeq}
\mhh(\S_1 ) &= \dfrac{(1+\veps )^{1/2} r_o}{2}\(1- \dfrac{r_o^2 \veps^2 }{A^2 u(1)^2(1+\veps)} + r_o^2 (1+\veps )   \) \\
&\leq \dfrac{(1+\sqrt{\veps}) r_o}{2}\(1- \dfrac{r_o^2 \veps^2 }{A^2 u(1)^2(1+\veps)} + r_o^2 (1+\veps )    \) \\
&\leq \mhh(\S_0 )  + \sqrt{\veps}\, C,
\end{split}
\end{align}
where $C$ is a positive constant depending only on $r_o$, $A$, and $u(1)$. The mean curvature of $\S_t$ is given via \eqref{eq-Hs} by
\begin{align}\label{meanc-minimal}
H(t)=\dfrac{2E'(t)}{v(t,\cdot) E(t)}=\dfrac{2\veps t}{A u(t,\cdot) E(t)^2},
\end{align}
so that $H(0)=0$ and $H(t)>0$ for $t \in (0,1]$. Notice that by \eqref{eq-Hawking-positiveeq} and our choice of $\veps$, we have $\mhh(\S_1 )>0$.
In order to apply Proposition \ref{prop-gluing2}, we need to perform a change of variable to bring $\gamma_c$ to the form $\gamma_c = ds^2 + f(s)^2 g_*$. After the change of variable $s(t) \definedas Au(1)t$, we can write $\gamma_c$ on $[Au(1) \theta, A u(1)] \times \bS^2$ as the rotationally symmetric metric
\begin{align*}
\gamma_c  =  ds^2 + \( 1 + \dfrac{\veps}{A^2 u(1)^2} s^2 \)r_o^2 g_*.
\end{align*}
Therefore, since $m > \mhh(\S,g_o,H_o\equiv0) + \delta \geq \mhh(\S_1)$, we can apply Proposition \ref{prop-gluing2} to obtain an asymptotically hyperbolic extension $(M,\gamma)$ of mass~$m$ (since it coincides with an AdS-Schwarzschild manifold outside a compact set), with $R(\gamma) > -6$, and such that it is foliated by mean convex spheres and $\partial M$ is isometric to $(\S,g_o)$ and it is minimal by \eqref{meanc-minimal}.
\end{proof}

As pointed out at the end of Subsection \ref{sec:computations}, each asymptotically hyperbolic extension constructed in Theorem \ref{thm-main} gives an upper bound for $ \m_{B}^{AH}(\S,g_o,H_o\equiv0)$ and therefore the following corollary holds.

\begin{coro} \label{coro1}
Let $(\S \cong \bS^2,g_o,H_o=0)$ be  Bartnik data satisfying $\lambda_1(-\Delta_{g_o}+K(g_o))>0$, where $\lambda_1(-\Delta_{g_o}+K(g_o))$ is defined as in Theorem \ref{thm-main}. Then the hyperbolic Bartnik mass of $(\S,g_o,H_o=0)$ satisfies
\begin{align*}
\m_B^{AH}(\S,g_o,H_o=0) \leq \mhh(\S,g_o,H_o=0)=\dfrac{1}{2}\( \(\dfrac{|\S|_{g_o}}{4 \pi}\)^{1/2} + \(\dfrac{|\S|_{g_o}}{4\pi}\)^{3/2} \).
\end{align*}
\end{coro}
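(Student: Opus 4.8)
The statement to prove is Corollary \ref{coro1}, which asserts the upper bound $\m_B^{AH}(\S,g_o,H_o=0) \leq \mhh(\S,g_o,H_o=0)$ for Bartnik data with $\lambda_1(-\Delta_{g_o}+K(g_o))>0$. Since the hyperbolic Bartnik mass is defined as an infimum of the total hyperbolic mass over admissible extensions, it suffices to exhibit, for every $m > \mhh(\S,g_o,H_o=0)$, an admissible asymptotically hyperbolic extension of mass exactly $m$, and then let $m \to \mhh(\S,g_o,H_o=0)$. This is precisely what Theorem \ref{thm-main} provides.

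**Key steps.** First I would fix an arbitrary $m > \mhh(\S,g_o,H_o=0)$ and invoke Theorem \ref{thm-main} to produce an asymptotically hyperbolic Riemannian $3$-manifold $(M,\gamma)$ with $R(\gamma) \geq -6$, minimal boundary isometric to $(\S,g_o)$, agreeing with the spatial AdS-Schwarzschild manifold of mass $m$ outside a compact set (so that its total hyperbolic mass is $m$), and foliated by mean convex spheres eventually coinciding with the coordinate spheres. Second, I would verify that $(M,\gamma)$ lies in the admissible class $\mathcal{A}(\S,g_o,H_o=0)$: the scalar curvature bound and the boundary condition are immediate from Theorem \ref{thm-main}, and the requirement that $M$ contain no closed minimal surfaces enclosing the boundary other than possibly the boundary itself follows from the mean-convex CMC foliation — any such enclosing minimal surface would have to intersect a leaf of the foliation tangentially, contradicting mean convexity via the maximum principle (this is the standard argument, identical to the asymptotically flat case in \cite{M-S} and \cite{CCMM}). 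Third, since $(M,\gamma)$ is admissible with total hyperbolic mass $m$, the definition of $\m_B^{AH}$ gives $\m_B^{AH}(\S,g_o,H_o=0) \leq m$. Finally, taking the infimum over all $m > \mhh(\S,g_o,H_o=0)$ yields $\m_B^{AH}(\S,g_o,H_o=0) \leq \mhh(\S,g_o,H_o=0)$, and the explicit evaluation of $\mhh(\S,g_o,H_o=0) = \tfrac12\big((|\S|_{g_o}/4\pi)^{1/2} + (|\S|_{g_o}/4\pi)^{3/2}\big)$ comes straight from substituting $H_o=0$ into \eqref{eq-AHHdef}.

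**Main obstacle.** The genuine content was already carried out in Theorem \ref{thm-main}, so the corollary itself is essentially a formal unwinding of the definition of the hyperbolic Bartnik mass. The only point requiring a word of care is confirming that the extension is truly \emph{admissible} in the sense of Section \ref{SS-mass} — in particular the no-enclosing-minimal-surfaces condition — but this is handled by the mean convex foliation already built into Theorem \ref{thm-main}. Thus I expect no real difficulty; the proof is a two- or three-line deduction.
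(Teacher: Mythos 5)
Your proof is correct and follows essentially the same route as the paper: the paper dismisses the corollary in one sentence (``each asymptotically hyperbolic extension constructed in Theorem \ref{thm-main} gives an upper bound for $\m_B^{AH}$''), and you simply unwind that sentence carefully, adding the (correct, standard) explanation of why the mean convex CMC foliation rules out enclosing minimal surfaces and hence guarantees admissibility. No gap.
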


\begin{remark}
As explained in Remark \ref{rem-K-3}, Corollary \ref{coro1} remains true if the assumption $\lambda_1(-\Delta_{g_o}+K(g_o))>0$, is replaced by the assumption $K(g_o)>-3$.
\end{remark}

\section{Collars with $K(g_o) > -3$}\label{SHnonzero}
In \cite{M-X} (see also \cite{CCMM}), a collar is constructed for CMC Bartnik data, modelled on the spatial Schwarzschild metric in order to obtain good control on the Hawking mass along the collar. Here, instead of controlling the usual Hawking mass along the collar, we would like to control the hyperbolic version of the Hawking mass \eqref{eq-AHHdef}. For this, the appropriate manifold to model our collars on is the spatial AdS-Schwarzschild manifold, which we recall from Section \ref{SS-adsschwarzschild} can be expressed as
\begin{align*}
g_{m,b}=ds^2+\umb(s)^2g_*.
\end{align*}
Note that if $m>0$ then we must choose the parameters $m$ and $b$ such that $r_o>r_+$.

In the asymptotically flat case, the CMC Bartnik data is assumed to have positive Gaussian curvature; a condition motivated by the stability condition as discussed in Section \ref{SHzero}. In the asymptotically hyperbolic case, where we consider manifolds with scalar curvature bounded below by $-6$, the analogous lower bound for the Gaussian curvature of CMC surfaces is $-3$. That is, we consider metrics $g_o$ on $\S$ satisfying $K(g_o) > -3$ throughout this section.

Since the condition $K(g_o)>-3$ is not preserved under rescaling, we adopt a different method to construct the smooth path of metrics $\{g(t)\}_{t\in[0,1]}$ used in the collar construction. We will use Ricci flow and exploit its special features in two dimensions.

\subsection{Smooth paths of metrics with a lower bound on the Gaussian curvature}
We prove the following lemma.
\begin{lemma} \label{Ricci2D}
Given a metric $g_o$ on $\S \cong \bS^2 $ with $K(g_o) > -\k$, where $\k>0$, there exists a smooth path of metrics $\{g(t)\}_{t\in[0,1]}$ such that\\[-1.5ex]
\begin{enumerate}\itemsep1.5ex
\item $g(0)=g_o$ and $g(t)$ is round on $[\theta,1]$ for some fixed $0<\theta<1$,
\item $\tr_{g(t)} g'(t) = 0$ for all $t \in [0,1]$, and
\item $K(g(t)) > -\k$ for all $t \in [0,1]$.
\end{enumerate}
\end{lemma}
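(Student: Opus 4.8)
The plan is to use the two-dimensional Ricci flow, which after normalization coincides with the Yamabe flow in dimension two and enjoys the well-known monotonicity and convergence properties established by Hamilton and Chow. Writing $g(t)$ along the flow as a conformal deformation $g(t) = e^{2\varphi(t)} g_o$ on $\S \cong \bS^2$, the normalized Ricci flow $\partial_t g = (\overline{K} - K(g))\, g$ preserves the total area, drives $g(t)$ to a round metric of the same area, and — crucially — satisfies a scalar evolution equation for the Gaussian curvature of the form $\partial_t K = \Delta_{g(t)} K + K^2 - \overline{K} K$, where $\overline{K}$ is the (time-independent, since area is fixed, and positive since $\chi(\bS^2) > 0$) average of $K$. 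The parabolic maximum principle applied to this equation shows that $\min_\S K(g(t))$ is non-decreasing whenever it is negative; hence $K(g_o) > -\kappa$ propagates to $K(g(t)) > -\kappa$ for all $t \geq 0$. This gives condition (3), and convergence of normalized Ricci flow on $\bS^2$ to a constant-curvature metric gives the "becomes round" part of condition (1).

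The remaining issues are bookkeeping. To get condition (2), note that along normalized Ricci flow $\partial_t g = (\overline{K} - K)\, g$ is pointwise conformal, so $\mathrm{tr}_{g(t)} \partial_t g = 2(\overline{K} - K)$ is generally nonzero; this is the same discrepancy Mantoulidis–Schoen and Miao–Xie face, and it is fixed by the standard trick of reparametrizing the metric by a time-dependent scaling $\lambda(t) > 0$ so that $\widetilde g(t) = \lambda(t)\, g(t)$ has $\mathrm{tr}_{\widetilde g(t)} \widetilde g\,'(t) = 0$ — equivalently, choosing $\lambda$ to keep the area form fixed pointwise is impossible, but choosing the trace-free part is exactly what is needed; more precisely one subtracts the trace by absorbing it into a conformal factor and checks that this preserves the sign condition on $K$ up to a harmless constant rescaling (which only improves a negative lower bound since rescaling by $\lambda > 1$ multiplies $K$ by $\lambda^{-1}$). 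Alternatively, and more simply, one records that Miao–Xie (Section 3 of \cite{M-X}, see also \cite{CCMM}) already carry out precisely this modification of the two-dimensional flow to arrange $\mathrm{tr}_{g(t)} g'(t) = 0$ while keeping the relevant curvature control, and the lower bound $K > -\kappa$ is preserved by the same argument since the modification is by a spatially-constant conformal factor.

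For condition (1), the flow only converges to round as $t \to \infty$, so I would reparametrize time by a fixed diffeomorphism $[0,\infty) \to [0,1)$ and then, since the limit metric is genuinely round and the convergence is smooth and exponential, splice in a cutoff: on $[\theta,1]$ redefine $g(t)$ to be the limiting round metric, interpolating smoothly on a short interval near $\theta$. One must check the interpolation can be done keeping $\mathrm{tr}_{g(t)} g'(t) = 0$ and $K(g(t)) > -\kappa$; since both the nearly-round flow metrics and the exactly-round limit have Gaussian curvature close to the positive constant $\overline{K}$, any reasonable interpolation (e.g. a convex combination of conformal factors followed by the same area-normalizing rescaling) stays in the region $K > -\kappa$, and one then rescales to restore the trace-free condition. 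I expect the main obstacle to be purely organizational: verifying that the two separate adjustments — the trace-free reparametrization and the "make it exactly round near $t=\theta$" cutoff — can be performed simultaneously without one spoiling the other, and checking that the sign condition $K > -\kappa$ genuinely survives both. None of this is deep; it is the same package used in \cite{M-S}, \cite{M-X}, and \cite{CCMM}, with the single new input being the maximum-principle preservation of a negative lower bound on $K$, which is immediate from the scalar curvature evolution equation in two dimensions.
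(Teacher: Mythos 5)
Your proposal follows essentially the same route as the paper: the normalized two-dimensional Ricci flow with Hamilton--Chow long-time existence and exponential convergence, the parabolic maximum principle applied to the evolution of $K$ to propagate the lower bound $K>-\k$, a time reparametrization plus splicing to make the path round on $[\theta,1]$, and the standard Mantoulidis--Schoen modification to arrange $\tr_{g(t)}g'(t)=0$. One correction: the trace-free condition cannot be obtained by a spatially constant rescaling $\lambda(t)$, since $\tr_{g(t)}\partial_t g(t)=2(\overline{K}-K(g(t)))$ varies over $\S$; the actual mechanism (in \cite{M-S}, to which the paper also defers) is to pull back by a family of diffeomorphisms $\phi_t$ obtained by solving a PDE on $\S$, and this preserves the pointwise lower bound on $K$ trivially, so your conclusion is unaffected.
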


\begin{proof}
It is well-known that in dimension 2, the area-preserving (or ``normalised") Ricci flow is equivalent to the flow
\begin{align*}
\begin{split}
\dd{}{t} g(t) &= 2(-K(g(t)) +  r_o^{-1})g(t),  \\
g(0)&=g_o,
\end{split}
\end{align*}
where $r_o$ is given as before by $| \S |_{g_o}=4\pi r_o^2$. Using the results of Hamilton \cite{Hamilton2} and Chow \cite{Chow91}, the solution $g(t)$ exists for all time and converges exponentially to a round sphere as $t \to \infty$.

We now turn our attention to the Gaussian curvature along the flow. The evolution of the Gaussian curvature along the normalised Ricci flow is given by the parabolic equation (see \cite[Eq. 3.1]{Hamilton2})
\begin{align*}
\dd{}{t} K(g(t)) = \Lap_{g(t)} K(g(t)) + 2K(g(t))(K(g(t))-r_o^{-1}).
\end{align*}

Using a maximum principle (combined with a comparison to an ODE) as in \cite{C-K} (see also~\cite{S-HT} for some more details), one has the following estimate for all $t\geq0$
\begin{align*}
K(g(t)) \geq \min\lbrace0,\min\limits_{\S} K(g(0))\rbrace.
\end{align*}
Since our initial metric satisfies $K(g_o) > -\k$, by the normalised Ricci flow we obtain a path $\{g(t)\}_{t\geq0}$ such that $g(0)=g_o$, $\lim_{t\to\infty}g(t)$ is round, $| \S |_{g(t)}$ is constant, and the Gaussian curvature satisfies $K(g(t)) > -\k$ for all $t\geq0$. The exponential convergence of the flow allows a reparametrisation to obtain a path $\{g(t)\}_{t\in[0,1]}$ such that $g(0)=g_o$ and $g(1)$ is a round sphere of area $4\pi r_o^2$. Notice that after a further reparametrisation we can ensure that (1) is satisfied while (3) still holds. To alter the flow so that (2) is satisfied without violating (1) and (3), one can solve a family of partial differential equations on $\S$ to obtain a $1$-parameter family of diffeomorphisms $\{ \phi_t \}$ on $\S$ such that the path defined by $\{\phi_t^*(g(t))\}$ satisfies (2). We refer the interested reader to \cite{M-S} for details.
\end{proof}

\subsection{Collar extensions}
The collar manifold that we consider is again $[0,1]\times\S$ equipped with a metric of the form \eqref{collar-general}; specifically, we use the metric
\begin{align}\label{eqCollar}
\gamma=A^2dt^2+r_o^{-2}\umb (Akt)^2g(t),
\end{align}
with constants $A,k>0$. By \eqref{eq-Hs}, we compute the mean curvature of $\S_t =\{ t \} \times \S$ to~be
\begin{align*}
H(t)\definedas H(\S_t)=\frac{2k}{\umb(Akt)}\sqrt{1+b\umb (Akt)^{2}-\frac{2m}{\umb (Akt)}}.
\end{align*}
In order to induce the given the Bartnik data at $t=0$, we need to prescribe the mean curvature of the boundary $\S_0$ to be the given constant $H_o$. It is elementary to see that in order to achieve this, we must fix $k$ in terms of $m$ and $b$ as
\begin{align}\label{eq-kmHpos}
k^2&=\frac{H_o^2r_o^2}{4}\( 1+br_o^2-\frac{2m}{r_o} \)^{-1}, 
\end{align}
which is equivalent to
\begin{align} \label{eq-k-H0-m}
k^2\(1-\frac{2m}{r_o}\)=\frac{(H_o^2-4k^2b)r_o^2}{4}.
\end{align}

As earlier, we compute the scalar curvature using \eqref{scalar-collar-general} and obtain
\begin{align*}
&R(\gamma)  + 6  \\
&\quad= 2u_{m,b}(Akt)^{-2}\left[r_o^2K(g(t))\right] \\
&\quad\quad +A^{-2}\left[-2\dfrac{A^2k^2 u_{m,b}'(Akt)^2 r_o^{-2} +2  A^2 k^2 u_{m,b}(Akt) r_o^{ -2} u_{m,b}''(Akt)}{u_{m,b}(Akt)^2 r_o^{-2}}   -\dfrac{1}{4} |\dot{g}(t)|^2_{g(t)} \right] +6 \\
&\quad= 2u_{m,b}(Akt)^{-2}\left[r_o^2K(g(t)) + 3u_{m,b}(Akt)^2\right]
+A^{-2}\left[-2A^2k^2\dfrac{1 + 3 bu_{m,b}(Akt)^2  }{u_{m,b}(Akt)^2 }   -\dfrac{1}{4} |\dot{g}(t)|^2_{g(t)} \right]  \\
&\quad = 2u_{m,b}(Akt)^{-2}\left[ r_o^2K(g(t))+ 3u_{m,b}(Akt)^2  -k^2 -3k^2b u_{m,b}(Akt)^2 - \dfrac{u_{m,b}(Akt)^2}{8 A^2} |\dot{g}(t)|^2_{g(t)}\right ].
\end{align*}
Using $\alpha$ and $\beta$ as defined by \eqref{alpha-beta-def}, we find
\begin{align}
\begin{split}\label{estimateScal-mb}
R(\gamma)  + 6 &\geq  2u_{m,b}(Akt)^{-2}\left[ (\b + 3u_{m,b}(Akt)^2)  -k^2 -3k^2 b u_{m,b}(Akt)^2 - \dfrac{\a}{2 A^2} u_{m,b}(Akt)^2 \right ]\\
 &\geq  2u_{m,b}(Akt)^{-2}\left[ (\b + 3r_o^2)  -k^2 -3k^2 b u_{m,b}(Akt)^2 - \dfrac{\a}{2 A^2} u_{m,b}(Akt)^2 \right ] .
\end{split}
\end{align}

Now, provided that $\b + 3r_o^2 >0$, we have some freedom in choosing parameters $A>0$, $m \in \R$, and $b \geq 0$ to control the remaining terms and ensure $R(\gamma)\geq-6$. We also would like to control the hyperbolic Hawking mass along the cross-sections $\S_t$ of our collars, which is given by (see \eqref{Hawking-level})
\begin{align}
\begin{split}\label{eq-HawkingHpos}
\mhh(\S_t) 
&=\dfrac{u_{m,b}(Akt)}{2}\(1- k^2 (u_{m,b}'(Akt))^2 +  u_{m,b}(Akt)^2\) \\
&=\dfrac{u_{m,b}(Akt) }{2}\(1- k^2 \(1+ bu_{m,b}(Akt)^2 - \dfrac{2m}{u_{m,b}(Akt)}\) +  u_{m,b}(Akt)^2\)\\
&=\dfrac{u_{m,b}(Akt) }{2}\(1- k^2 + u_{m,b}(Akt)^2(1 - k^2 b)\)  + k^2 m. 
\end{split}
\end{align}
\vspace{1ex}

We next turn to consider different choices of the parameters $m$, $b$, and $A$ in order to get good estimates on the hyperbolic Hawking mass at the end of the collar. It will be useful to choose $m<0$. We will first consider the case where $b=0$, which corresponds to using the profile curves of Schwarzschild manifolds of negative mass $m$. We will use \eqref{eq-kmHpos} to express $k$ as a function of $m$ and will then take $\vert m\vert$ very large. This approach shows the existence of admissible extensions with the desirable properties stated in Theorem~\ref{thm-extension1}, given below. Such collars therefore give estimates on the Bartnik mass with these desirable properties, as stated in Corollary \ref{coro-main1}.

After stating Theorem \ref{thm-extension1} and Corollary \ref{coro-main1}, we consider the case $b>0$ corresponding to using the profile curves of AdS-Schwarzschild manifolds of negative mass $m$ and cosmological constant $\Lambda=-3b$. Geometrically, $\Lambda=-3b$ means that the AdS-Schwarzschild manifolds are asymptotic to a hyperboloid of ``radius'' $1/\sqrt{b}$. We will then pick a (small) number $\delta>0$, impose that $k^{2}b=\delta$, express $b$ and $k$ in terms of $m$ (and $\delta$) via \eqref{eq-kmHpos}, and proceed to take $\vert m\vert$ very large. This will then lead to existence of admissible extensions with the desirable properties stated in Theorem \ref{thm-extension2}, and therefore to the estimate on the Bartnik mass stated in Corollary \ref{coro-main2}. 

Note that there is nothing inherently special about the choices of parameters we use. One would obtain similar estimates with different choices of $m$ and $b$, however we are not aware of any choices of parameters that give qualitatively different estimates to those obtained here.\\[-2ex]

\paragraph*{\emph{\underline{Case $b=0$}}}
Recall that we choose $m<0$. As described above, we will be interested in considering collars where $|m|$ is very large and $k$ is given in terms of $m$ via \eqref{eq-kmHpos}. In this case, we can estimate the area radius at the end of our collar as follows. We have
\begin{align*}
u_{m,0}'(s) = \sqrt{ 1- \dfrac{2m}{u_{m,0}(s)}} < \sqrt{ 1- \dfrac{2m}{r_o}},
\end{align*}
for $s>0$ which implies
\begin{align}\label{um0Ak-est}
u_{m,0}(Ak) <  \sqrt{ 1- \dfrac{2m}{r_o}}Ak + r_o,
\end{align}
and from \eqref{eq-k-H0-m} we write this as
\begin{align}\label{eq-ubound-m0}
u_{m,0}(Ak)< \(\frac{H_oA}{2}+1\)r_o.
\end{align}

Using \eqref{um0Ak-est}, we next estimate the scalar curvature of the collar, so that we may choose the remaining parameters so that we have the scalar curvature strictly bounded below by $-6$:
\begin{align*}
&R(\gamma)  + 6 \\
&\quad\geq  2u_{m,0}(Akt)^{-2}\left[ \b + 3r_o^2  -k^2 - \dfrac{\a}{2 A^2} \,u_{m,0}(Akt)^2 \right ]  \\
&\quad>  2u_{m,0}(Akt)^{-2}\left[ \b + 3r_o^2  -k^2 - \a \( \( 1 -\dfrac{2m}{r_o} \)  k^2 + \dfrac{r_o^2}{A^2}  \)\right ]  \\
&\quad \geq  2u_{m,0}(Akt)^{-2}\left[ \b + 3r_o^2  -k^2\(1+\a\( 1 -\dfrac{2m}{r_o} \) \) - \a \dfrac{r_o^2}{A^2}\right ]. 
\end{align*}

It follows that we must choose $A>A_o$, where $A_o$ is defined in terms of $m$ by
\begin{align}\label{eq-A0}
A_o \definedas r_o\( \dfrac{\a}{ \b + 3r_o^2  -k^2\(1+\a\( 1 -\frac{2m}{r_o} \) \)}  \)^{1/2},
\end{align}
for which to be defined, we must impose
\begin{align*}
\b + 3r_o^2  &> k^2\(1+\a\( 1 -\dfrac{2m}{r_o} \) \) \\
&=\dfrac{H_o^2 r_o^2}{4}\( 1 -\dfrac{2m}{r_o} \)^{-1}  +\a  \dfrac{H_o^2 r_o^2}{4}. 
\end{align*}
Provided that the initial data satisfies
\begin{align*}
\dfrac{H_o^2 r_o^2}{4} < \dfrac{\b + 3r_o^2 }{\a}
\end{align*}
(or $\a=0$), this can always be ensured, simply by choosing $|m|$ sufficiently large.

Now, given such a collar, we must estimate the hyperbolic Hawking mass at the end of the collar where we will later glue on an AdS-Schwarzschild exterior. We estimate the hyperbolic Hawking mass of $\Sigma_1 = \{ 1 \} \times \Sigma$ by
\begin{align*}
\mhh(\Sigma_1) &= \dfrac{u_{m,0}(Ak)}{2} \(1 -k^2 + u_{m,0}(Ak)^2 \) +k^2m \\
&\leq  \( \dfrac{H_oA}{2}+ 1 \) \dfrac{r_o}{2} \( 1 -k^2 +  \( \dfrac{H_oA}{2}+ 1 \)^2 r_o^2   \) +k^2m \\
&=\( \dfrac{H_oA}{2}+ 1 \) \mhh(\S_o) - \dfrac{H_oA}{2} k^2m  + \dfrac{r_o^3}{2}\( \dfrac{H_oA}{2}+ 1 \)\( \( \dfrac{H_oA}{2}+ 1 \)^2   - 1 \),
\end{align*}
where the inequality follows from \eqref{eq-ubound-m0}. Observe that this inequality holds for all $m < 0$, $k=k(m)$ given by \eqref{eq-kmHpos}, and $A > A_o=A_o(m)$ given by \eqref{eq-A0}.

As a consequence, it also holds for $A=A_o$, so that
\begin{align*}
\mhh(\S_1) \leq \( \dfrac{H_oA_o}{2}+ 1 \) \mhh(\S_o) - \dfrac{H_oA_o}{2} k^2m  + \dfrac{r_o^3}{2}\( \dfrac{H_oA_o}{2}+ 1 \)\( \( \dfrac{H_oA_o}{2}+ 1 \)^2   - 1 \)
\end{align*}
holds for each fixed $m<0$.

Now let $\veps>0$. In order to estimate the hyperbolic Hawking mass with \eqref{eq-HawkingHpos} using \eqref{eq-kmHpos}, we first compute
\begin{align*}
	\lim_{m\to -\infty}k^2m&=-\frac{r_o}{2}\,\frac{H_o^2r_o^2}{4},\\
	A_{-\infty} \definedas \lim_{m\to -\infty} A_0&= r_o \(  \dfrac{\alpha}{\beta + 3r_o^2 - \alpha \frac{H_o^2 r_o^2}{4}} \)^{1/2}.
\end{align*}
Thus, taking $|m|$ sufficiently large, this leads to
\begin{align}
\begin{split}\label{estimate1}
\mhh(\S_1) &\leq \( \dfrac{H_o A_{-\infty}}{2}+ 1 \) \mhh(\S_o)  \\
&\quad + \dfrac{H_or_o^3 A_{-\infty}}{4}\left[ \dfrac{H_o^2 }{4}  +\( \dfrac{H_o A_{-\infty}}{2}+ 1 \)\(  \dfrac{H_o A_{-\infty}}{2} + 2 \)\right] + \veps.
\end{split}
\end{align}
\vspace{1ex}

We have thus constructed a family of collar extensions $([0,1]\times\S,\gamma)$ parametrised by $m<0$ which have scalar curvature $R(\gamma)>-6$ and hyperbolic Hawking mass estimated from above as in \eqref{estimate1} for any $\veps>0$ whenever $\vert m\vert$ is suitably large depending on $\veps$.

In order to glue these collars to an AdS-Schwarzschild manifold using Proposition \ref{prop-gluing2}, we proceed as in Section \ref{SHzero}. The only difference is that here, we need to ensure that
\begin{align*}
\mhh(\Sigma_{1}) \geq  -u_{m,0}(Ak)^3.
\end{align*}
This is equivalent to the condition
\begin{align*}
1 + 3u_{m,0}(Ak)^2   \geq k^2\(1 -\dfrac{2 m}{u_{m,0}(Ak)}\),
\end{align*}
which by \eqref{eq-kmHpos} and $u_{m,0}(Ak) \geq r_o$ can be ensured by restricting $H_o$ to satisfy
\begin{align} 
\frac{H_o^2r_o^2}{4}\leq 1+3r_o^2.
\end{align}
This simply says that the initial hyperbolic Hawking mass cannot be too negative.

That is, Proposition \ref{prop-gluing2} gives us an admissible asymptotically hyperbolic extension from given Bartnik data with controlled mass. In particular, we have established the following theorem.
\begin{thm}\label{thm-extension1}
Let $(\S\cong\bS^2,g_o,H_o)$ be Bartnik data such that the Gaussian curvature of $g_o$ satisfies $K(g_o) > -3$ and $H_o$ is a positive constant. If $H_o$ and the constants $r_o$, $\alpha$, and $\beta$ defined by \eqref{alpha-beta-def} satisfy
	\begin{align}
		\frac{H_o^2r_o^2}{4}<\min\( 1+3r_o^2,\frac{\beta+3r_o^2}{\alpha} \),
	\end{align}
then for any 
\begin{align*}
m > m_*,
\end{align*}
where $m_*$ is defined by
\begin{align*} 
m_*\definedas \( \xi + 1 \) \mhh(\S,g_o,H_o)  
+ \dfrac{r_o^3}{2}\xi\left[ \dfrac{H_o^2 }{4}  +\( \xi + 1 \)\(  \xi+ 2 \)\right],
\end{align*}
with
	\begin{align*}
	\xi\definedas \dfrac{H_or_o}{2}\(\dfrac{\a}{(\b + 3r_o^2) - \a \frac{H_o^2 r_o^2}{4}} \)^{1/2},
	\end{align*}
there is an asymptotically hyperbolic Riemannian manifold $(M,\gamma)$ with $R(\gamma) \geq -6$  such that\\[-1.5ex]
\begin{enumerate}[(i)]\itemsep1.5ex
\item the boundary $\partial M$ is isometric to $(\S,g_o)$ and has constant mean curvature $H_o$,
\item outside a compact set, $M$ coincides with the spatial AdS-Schwarzschild manifold of mass $m$, and
\item $M$ is foliated by mean convex spheres that eventually coincide with the coordinate spheres in the spatial AdS-Schwarzschild manifold.\\[-1.5ex]
\end{enumerate}
\end{thm}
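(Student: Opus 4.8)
The plan is to assemble the collar construction of this section with the gluing machinery of Section \ref{Sgluing}. First I would fix the Bartnik data $(\S,g_o,H_o)$ with $K(g_o)>-3$, invoke Lemma \ref{Ricci2D} with $\k=3$ to produce the smooth path $\{g(t)\}_{t\in[0,1]}$ satisfying conditions (1)--(3), and extract the scale-invariant constants $r_o$, $\a$, $\b$ from \eqref{alpha-beta-def}. Note $\b+3r_o^2>0$ by condition (3) of Lemma \ref{Ricci2D}, which is exactly what the scalar curvature estimate \eqref{estimateScal-mb} needs. Then, for $b=0$ and each $m<0$, I would define $k=k(m)$ by \eqref{eq-kmHpos} and build the collar metric $\gamma=A^2dt^2+r_o^{-2}u_{m,0}(Akt)^2g(t)$ on $[0,1]\times\S$ as in \eqref{eqCollar}, whose inner boundary $\S_0$ is isometric to $(\S,g_o)$ with constant mean curvature $H_o$ by the choice of $k$.

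The core of the argument is the chain of inequalities already laid out in the excerpt. I would first verify that, under the hypothesis $\tfrac{H_o^2r_o^2}{4}<\tfrac{\b+3r_o^2}{\a}$ (or $\a=0$), taking $|m|$ large enough makes $\b+3r_o^2>k^2\bigl(1+\a(1-\tfrac{2m}{r_o})\bigr)$, so that $A_o=A_o(m)$ from \eqref{eq-A0} is well-defined, and then the scalar curvature computation gives $R(\gamma)>-6$ for every $A\geq A_o$. Next I would record the upper bound for $\mhh(\S_1)$ obtained by combining \eqref{eq-HawkingHpos} with the area-radius estimate \eqref{eq-ubound-m0}, specialise to $A=A_o$, and pass to the limit $m\to-\infty$ using the computed limits $k^2m\to-\tfrac{r_o}{2}\cdot\tfrac{H_o^2r_o^2}{4}$ and $A_o\to A_{-\infty}$; this yields, for any $\veps>0$ and $|m|$ large, the estimate \eqref{estimate1}, i.e. $\mhh(\S_1)\leq m_*+\veps$ where $m_*$ is the quantity in the theorem statement (note $\xi=\tfrac{H_oA_{-\infty}}{2}$). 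After a change of variable $s=A u_{m,0}^{-1}$-type reparametrisation turning $\gamma$ into the rotationally symmetric form $ds^2+f(s)^2g_*$ near the outer boundary (using that $g(t)$ is round for $t\in[\theta,1]$), the collar satisfies hypotheses (i)--(ii) of Proposition \ref{prop-gluing2} since $f'>0$ forces positive mean curvature on each cross-section.

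Finally I would check hypothesis (iii) of Proposition \ref{prop-gluing2}, namely $\mhh(\S_1)\geq -u_{m,0}(Ak)^3$. As shown in the excerpt this reduces via \eqref{eq-kmHpos} and $u_{m,0}(Ak)\geq r_o$ to the condition $\tfrac{H_o^2r_o^2}{4}\leq 1+3r_o^2$, which is the other half of the hypothesis $\tfrac{H_o^2r_o^2}{4}<\min\bigl(1+3r_o^2,\tfrac{\b+3r_o^2}{\a}\bigr)$. With all three hypotheses of Proposition \ref{prop-gluing2} verified, given any $m>m_*$ I first pick $\veps>0$ with $m>m_*+\veps$, then choose $|m'|$ large enough (with $m'<0$ the collar parameter, not to be confused with the exterior mass) that the collar has $\mhh(\S_1)\leq m_*+\veps<m$; Proposition \ref{prop-gluing2} with exterior mass $m_e=m>\mhh(\S_1)$ then smoothly glues the collar to the spatial AdS-Schwarzschild manifold of mass $m$, keeping $R(\gamma)\geq-6$ and producing the mean-convex CMC foliation. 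The resulting $(M,\gamma)$ has all the stated properties (i)--(iii), and since it is AdS-Schwarzschild of mass $m$ outside a compact set it has total hyperbolic mass $m$.

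I expect the main obstacle to be purely bookkeeping rather than conceptual: one must be careful that the two uses of ``$m$'' (the collar profile parameter, which is driven to $-\infty$, versus the exterior AdS-Schwarzschild mass, which is the prescribed number $>m_*$) do not get conflated, and that the limit passage $m\to-\infty$ interacts correctly with the simultaneous choice $A=A_o(m)$ — in particular that $A_{-\infty}$ is finite and positive, which again relies on $\tfrac{H_o^2r_o^2}{4}<\tfrac{\b+3r_o^2}{\a}$. Everything else is the direct estimate \eqref{estimate1} together with a routine application of Proposition \ref{prop-gluing2}; no new analytic input beyond Lemma \ref{Ricci2D} and the gluing results of Section \ref{Sgluing} is needed.
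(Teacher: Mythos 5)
Your proposal is correct and reproduces the paper's argument essentially line for line: Lemma~\ref{Ricci2D} with $\k=3$ for the path, the $b=0$ collar with $k$ fixed by \eqref{eq-kmHpos}, the scalar-curvature estimate and the threshold $A_o$ from \eqref{eq-A0}, the Hawking-mass estimate \eqref{estimate1} obtained by passing to the limit $m\to-\infty$ (with the identification $\xi=\tfrac{H_oA_{-\infty}}{2}$), and finally Proposition~\ref{prop-gluing2} after rewriting the round tail as $ds^2+f(s)^2g_*$ and checking hypothesis~(iii) via $\tfrac{H_o^2r_o^2}{4}\le 1+3r_o^2$. The bookkeeping you flag about the two roles of the letter $m$ (collar profile parameter driven to $-\infty$ versus the prescribed exterior mass $>m_*$) is indeed the main subtlety, and you handle it correctly by renaming the collar parameter.
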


It is clear that the Riemannian 3-manifolds $(M,\gamma)$ obtained in Theorem \ref{thm-extension1} are admissible extensions of the Bartnik data $(\S\cong\bS^2,g_o,H_o)$, and thus they provide upper bounds for its hyperbolic Bartnik mass. We therefore obtain the following corollary.

\begin{coro}\label{coro-main1}
	Let $(\Sigma\cong\bS^2,g_o,H_o)$ be Bartnik data as in Theorem \ref{thm-extension1}. The hyperbolic Barnik mass of $(\Sigma,g_o,H_o)$ satisfies
	\begin{align}
		\m_{B}^{AH}(\Sigma,g_o,H_o)\leq \( \xi + 1 \) \mhh(\S,g_o,H_o)  
+ \dfrac{r_o^3}{2}\xi\left[ \dfrac{H_o^2 }{4}  +\( \xi + 1 \)\(  \xi+ 2 \)\right], 
	\end{align} 
	where $\xi$ is defined as in Theorem \ref{thm-extension1}.
\end{coro}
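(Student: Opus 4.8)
This is a direct consequence of Theorem \ref{thm-extension1} together with the definition of the hyperbolic Bartnik mass, so the proof is short. The plan is the following. Fix Bartnik data $(\S\cong\bS^2,g_o,H_o)$ satisfying the hypotheses of Theorem \ref{thm-extension1}, and let $m_*$ and $\xi$ be as defined there. For every $m>m_*$, Theorem \ref{thm-extension1} produces a smooth asymptotically hyperbolic Riemannian $3$-manifold $(M,\gamma)$ with $R(\gamma)\geq-6$ whose boundary $\partial M$ is isometric to $(\S,g_o)$ with constant induced mean curvature $H_o$, which coincides with the spatial AdS-Schwarzschild manifold of mass $m$ outside a compact set -- hence has total hyperbolic mass $m(M,\gamma)=m$ by the discussion in Section \ref{SS-mass} -- and which is foliated by mean convex CMC spheres eventually coinciding with the coordinate spheres of the AdS-Schwarzschild end.

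Next I would verify that each such $(M,\gamma)$ belongs to the admissible class $\mathcal{A}(\S,g_o,H_o)$. The only condition requiring an argument is the absence of closed minimal surfaces enclosing $\partial M$ (other than possibly $\partial M$ itself): since $M$ is foliated by strictly mean convex spheres, a standard strong maximum principle argument rules out such minimal surfaces -- a hypothetical enclosing minimal surface would, by compactness, be touched from outside by a leaf of the foliation, and at such a point of tangency minimality would contradict the strict mean convexity of that leaf. Hence $(M,\gamma)\in\mathcal{A}(\S,g_o,H_o)$, and by the definition $\m_{B}^{AH}(\S,g_o,H_o)=\inf\{m(M,\gamma):(M,\gamma)\in\mathcal{A}(\S,g_o,H_o)\}$ we obtain $\m_{B}^{AH}(\S,g_o,H_o)\leq m(M,\gamma)=m$.

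Since this holds for every $m>m_*$, letting $m\downarrow m_*$ yields $\m_{B}^{AH}(\S,g_o,H_o)\leq m_*$, which is exactly the asserted inequality once $m_*$ is written out in terms of $\mhh(\S,g_o,H_o)$, $r_o$, $H_o$, and $\xi$ as in Theorem \ref{thm-extension1}. There is no genuine obstacle here; the only non-formal ingredient is the maximum principle step showing that a mean convex foliation excludes enclosing minimal surfaces, which is precisely the mechanism guaranteeing admissibility throughout this circle of constructions (cf. \cite{M-S,CCMM}).
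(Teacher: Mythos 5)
Your proof is correct and follows essentially the same route the paper takes: apply Theorem~\ref{thm-extension1} for each $m>m_*$, observe that the resulting $(M,\gamma)$ has total hyperbolic mass $m$ (being exactly AdS-Schwarzschild outside a compact set) and lies in $\mathcal{A}(\S,g_o,H_o)$, and let $m\downarrow m_*$. The paper states the admissibility as ``clear''; your maximum-principle explanation of why a mean convex foliation excludes enclosing minimal surfaces is exactly the standard mechanism it is tacitly invoking, so this is an elaboration rather than a different argument.
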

\vspace{2ex}

\paragraph*{\emph{\underline{Case $b>0$}}}
Recall that we again choose $m<0$. As described above, we will be interested in considering collars where $|m|$ is very large and $k$ is given as a function of $m$ and $b$ via \eqref{eq-kmHpos}, where $b$ will be coupled to $b$ via a small constant $\delta>0$, see below. In this case, we estimate $u_{m,b}$ by noting that
\begin{align*}
b^{-1/2}\frac{d}{ds}\sinh^{-1}(\sqrt b\,\umb (s))
&= \sqrt{\frac{1+b\,\umb(s) ^2-\frac{2m}{\umb(s) }}{1+b\,\umb(s) ^2}},  
\end{align*}
hence
\begin{align*}
\frac{d}{ds}\sinh^{-1}(\sqrt b\,\umb (s))
&\leq \sqrt b\sqrt{1-\frac{2m}{r_o(1+br_o^2)}},
\end{align*}
which in turn implies
\begin{align}\label{eq-m-neg-u-bound}
\umb(s) \leq b^{-1/2} \sinh\( \sqrt b\sqrt{1-\frac{2m}{r_o(1+br_o^2)}}\;s+\sinh^{-1}(\sqrt{b}\,r_o) \).
\end{align}
Moreover, using hyperbolic identities we have
\begin{align} \label{hyp-estimate}
\begin{split}
\umb(s)\leq\,& r_o\cosh\( \sqrt{b}\sqrt{1-\frac{2m}{r_o(1+br_o^2)}}\;s\)\\
&+\sqrt{b^{-1}+r_o^2}\,\sinh\(\sqrt{b}\sqrt{1-\frac{2m}{r_o(1+br_o^2)}}\;s \).
\end{split}
\end{align}

Let $\veps > 0$ and pick any $0 < \delta < \min\left\{ \frac{H_o^2}{4}, \veps, \frac12\right\}$. For any $m<0$ we then have a unique $b=b(m)$ satisfying $k^2b = \delta$, where
	\begin{align*}
	k^2 = \dfrac{H_o^2 r_o^2}{4}\(1 + br_o^2 - \dfrac{2m}{r_o}  \)^{ -1}
	\end{align*}
	is given by \eqref{eq-kmHpos}. That is, we choose
	\begin{align}\label{eq-bdefn}
	b=\delta\(1-\frac{2m}{r_o} \)\(\frac{H_o^2}{4}-\delta \)^{-1}r_o^{-2},
	\end{align}
	which is positive by our smallness assumption on $\delta$. From \eqref{eq-bdefn}, we are also able to express $k$ in terms of $m$ as
\begin{align*}
k^2 = \(\dfrac{H_o^2}{4}-\delta\)r_o^2 \( 1  -\dfrac{2m}{r_o}  \)^{ -1}
\end{align*}
so that, by direct computations,
\begin{align}
\begin{split}\label{extraidentity}
\lim_{m\to-\infty}k^2\,m&=- \(\frac{H_o^2}{4}-\delta\)\frac{r_o^3}{2},\\[0.5ex]
\lim_{m\to-\infty}-\frac{2m}{b r_o^3}&=\frac{1}{\delta}\(\frac{H_o^2}{4}-\delta \),\\[1.25ex]
\lim_{m\to-\infty}b&=+\infty.
\end{split}
\end{align}
From this, it is clear that we can ensure $k^2<\veps$ by choosing $|m|$ sufficiently large.

Now set
\begin{align}\label{Ao-mb}
A_o \definedas \begin{cases} \sqrt{\dfrac{\alpha}{6}}& \beta >0, \\[3ex]
\sqrt{\dfrac{\alpha}{3 + \frac{\beta}{r_o^2}}}\quad& -3r_o^2 < \beta \leq 0. \end{cases}
\end{align}

We will now estimate $u_{m,b}(Ak)$ for a fixed $m<0$ that will be chosen with $\vert m\vert$ large in terms of $\veps$, with the above choices of $k$ and $b$ (which are independent of $A$). For any $A>A_o$ and $\vert m\vert$ large enough in terms of $\veps$, the limits given above show that we can ensure
\begin{align}
\begin{split}\label{umb-eps-est}
\umb(Ak) \leq\,& r_o\cosh\( \sqrt{\delta}A\sqrt{1-\frac{2m}{r_o(1+br_o^2)}}\)\\&+\sqrt{b^{-1}+r_o^2}\,\sinh\(\sqrt{\delta}A\sqrt{1-\frac{2m}{r_o(1+br_o^2)}} \)\\
\leq\,& (r_o + \veps)\exp\(\frac{A}{2}(H_o+\veps) \).
\end{split}
\end{align}

Let us now consider the scalar curvature of the collar. First recall that by \eqref{estimateScal-mb}, the scalar curvature satisfies
\begin{align*}
&R(\gamma) + 6 \geq 2u_{m,b}(Akt)^{-2}\left[ \b -k^2 +\(3-3k^2b - \dfrac{\a}{2 A^2}  \) \umb(Akt)^2\right ].
\end{align*}
In the case where $\beta>0$, by \eqref{estimateScal-mb} and the definition of $A_o$ in \eqref{Ao-mb}, it is clear that choosing $|m|$ sufficiently large ensures $R(\gamma)+6>0$. Similarly, if $-3r_o^2 < \beta \leq 0$, we have from the definition of $A_o$ in \eqref{Ao-mb}
\begin{align*}
R(\gamma) + 6
&\geq 2u_{m,b}(Akt)^{-2}\left[ \b-k^2 +\(3 - \dfrac{\a}{2 A_o^2}  \) \umb(Akt)^2\right ]-6\delta \\
&\geq 2u_{m,b}(Akt)^{-2}\left[ \b -k^2 +\(3 - \frac32 - \frac{\beta}{2r_o^2}  \) r_o^2\right ]-6\delta \\
&\geq 2u_{m,b}(Akt)^{-2}\left[ \frac12(\b+3r_o^2)-k^2\right]-6\delta\\
&\geq \frac{\b+3r_o^2}{u_{m,b}(Ak)^2}-O(\veps),
\end{align*}
so again we obtain $R(\gamma)+6>0$ from \eqref{umb-eps-est} and \eqref{extraidentity} by choosing $|m|$ large enough. It follows from \eqref{umb-eps-est} that for sufficiently large $|m|$ and $A$ sufficiently close to $A_o$ in terms of $\veps$, we can estimate $\mhh(\S_1)$ expressed as in \eqref{eq-HawkingHpos}, and using \eqref{extraidentity} by
\begin{align*}
\mhh(\S_1) &=\dfrac{u_{m,b}(Ak) }{2}\(1- k^2 +(1 -k^2 b)u_{m,b}^2(Ak)  \) + k^2m\\
&\leq  \dfrac{(r_o + \veps)}{2}\exp\(\frac{A}{2}(H_o+\veps) \)\(1 -k^2 +(1 -\delta)  (r_o + \veps)^2\exp\(A(H_o+\veps) \)  \) \\
&\qquad - \(\frac{H_o^2}{4}-\delta \)\frac{r_o^3}{2} + \veps\\
&\leq  \dfrac{r_o}{2} \exp\(\frac{AH_o}{2} \)\(1+(1 -\delta) r_o^2\exp\(AH_o \)  \) - \(\frac{H_o^2}{4}-\delta \)\frac{r_o^3}{2}  + O(\veps) \\
&\leq  \dfrac{r_o}{2} \exp\(\frac{A_oH_o}{2} \)\(1+r_o^2\exp\(A_oH_o \)  \) - \frac{H_o^2 r_o^3}{8} + O(\veps).
\end{align*}

For the sake of presentation, we now define
\begin{align*}
\zeta \definedas \exp\(\frac{A_o H_o}{2} \).
\end{align*}
 
Recall that
\begin{align*} 
\mhh(\S_0)=\frac{r_o}{2}\( 1 -\frac{H_o^2 r_o^2}{4} + r_o^2 \)=\frac{r_o}{2} - \frac{H_o^2 r_o^3}{8} + \frac{r_o^3}{2}.
\end{align*}
By the above, we have shown that for any $\veps>0$ we can construct collars with non-negative scalar curvature of the form \eqref{eqCollar}, satisfying
\begin{align*}
\mhh(\S_1) &\leq
\zeta\mhh(\S_0) + \(1-\zeta\) \frac{H_o^2 r_o^3}{8}+ \zeta\( \zeta^2 - 1 \)\dfrac{r_o^3}{2} +O(\veps).
\end{align*}
Unlike the collars obtained above when $b=0$, these collars require no additional restrictions on the initial Bartnik data. However, they suffer from the fact that the mass increases exponentially in $A_oH_o$ rather than linearly.\\

It is clear that Proposition~\ref{prop-gluing2} can be applied directly, after change of variables $s \definedas Akt$ provided
\begin{align*}
\mhh(\S_1) \geq  -u_{m,b}(Ak)^3.
\end{align*}
By \eqref{eq-HawkingHpos}, this is equivalent to the condition
\begin{align*}
\dfrac{r_o}{2}\(1- k^2 + u_{m,b}(Ak)^2(3 - k^2 b)\)  + k^2 m \geq 0.
\end{align*}

It is straightforward to check that, by again taking $|m|$ sufficiently large and $A$ sufficiently close to $A_o$, this is satisfied if we enforce
\begin{align*}
\frac{H_o^2r_o^2}{4}<1+3r_o^2.
\end{align*}
That is, Proposition \ref{prop-gluing2} again gives us an admissible asymptotically hyperbolic extension from given Bartnik data with controlled mass. Thus, we have established the following theorem.\\

\begin{thm}\label{thm-extension2}
Let $(\S\cong\bS^2,g_o,H_o)$ be Bartnik data such that the Gaussian curvature of $g_o$ satisfies $K(g_o) > -3$ and $H_o$ is a positive constant, and let $r_o$, $\alpha$, and $\beta$ be the constants defined by \eqref{alpha-beta-def}. If the data satisfies 
	\begin{align}\label{eq-cond-thm2}
	\frac{H_o^2r_o^2}{4}<1+3r_o^2,
	\end{align}
then for any 
\begin{align*}
m > m_*,
\end{align*}
	where $m_*$ is defined by
\begin{align*}
	m_*\definedas \zeta\mhh(\S,g_o,H_o)+(\zeta-1)\frac{H_o^2r_o^3}{8}+ \zeta\( \zeta^2 - 1 \)\dfrac{r_o^3}{2}
\end{align*}
with
\begin{align*}
\zeta \definedas \begin{cases} \exp\(\dfrac{H_o}{2}\sqrt{\dfrac{\alpha}{6}}\)& \beta >0, \\[3ex]
	\exp\(\dfrac{H_o}{2}\sqrt{\dfrac{\alpha}{3 + \frac{\beta}{r_o^2}}}\)\quad& -3r_o^2 < \beta \leq 0,
\end{cases}
\phantom{a}\\
\end{align*}
there is an asymptotically hyperbolic Riemannian manifold $(M,\gamma)$ with $R(\gamma) \geq -6$  such that\\[-1.5ex]
\begin{enumerate}[(i)]\itemsep1.5ex
\item the boundary $\partial M$ is isometric to $(\S,g_o)$ and has constant mean curvature $H_o$,
\item outside a compact set, $M$ coincides with the spatial AdS-Schwarzschild manifold of mass $m$, and
\item $M$ is foliated by mean convex spheres that eventually coincide with the coordinate spheres in the spatial AdS-Schwarzschild manifold.\\[-1.5ex]
\end{enumerate}
\end{thm}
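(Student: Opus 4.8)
The plan is to follow essentially the same structure as the proof of Theorem \ref{thm-extension1}, but using the profile curves $u_{m,b}$ of the AdS-Schwarzschild manifolds of negative mass $m$ with cosmological constant $\Lambda=-3b$, where $b$ is tied to $m$ via the constraint $k^2 b=\delta$ for a fixed small $\delta$. The collar is the manifold $[0,1]\times\S$ equipped with the metric \eqref{eqCollar}, and the parameter $k$ is fixed in terms of $m$ and $b$ by \eqref{eq-kmHpos} so that the inner boundary $\S_0$ has constant mean curvature $H_o$; with $k^2b=\delta$ imposed, this determines $b=b(m)$ via \eqref{eq-bdefn} and gives the clean expressions for $k^2$, $k^2m$, and the various limits as $m\to-\infty$ recorded in \eqref{extraidentity}. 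The boundary isometry and mean curvature condition (i) is then immediate from the construction, exactly as in Section \ref{SHnonzero}.

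First I would verify that the scalar curvature of the collar satisfies $R(\gamma)>-6$. This uses the estimate \eqref{estimateScal-mb}, splitting into the two cases $\beta>0$ and $-3r_o^2<\beta\leq 0$ according to the definition of $A_o$ in \eqref{Ao-mb}: in the first case the coefficient of $u_{m,b}(Akt)^2$ is made positive directly by the choice $A_o=\sqrt{\alpha/6}$ together with $k^2b=\delta$ small, while in the second case one absorbs the $-6\delta$ error and uses $u_{m,b}(Akt)\geq r_o$ plus the bound \eqref{umb-eps-est} on $u_{m,b}(Ak)$, concluding by taking $|m|$ large using \eqref{extraidentity}. Next I would estimate the hyperbolic Hawking mass at the outer end $\S_1$ using the formula \eqref{eq-HawkingHpos}, inserting the upper bound \eqref{umb-eps-est} for $u_{m,b}(Ak)$ and the limit for $k^2m$ from \eqref{extraidentity}; taking $A$ close to $A_o$ and $|m|$ large yields, after rearranging with $\zeta\definedas\exp(A_oH_o/2)$ and the explicit value of $\mhh(\S_0)$, the bound
\begin{align*}
\mhh(\S_1)\leq \zeta\,\mhh(\S,g_o,H_o)+(\zeta-1)\frac{H_o^2r_o^3}{8}+\zeta(\zeta^2-1)\frac{r_o^3}{2}+O(\veps).
\end{align*}

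To finish, I would apply Proposition \ref{prop-gluing2} after the change of variable $s\definedas Akt$, which brings $\gamma$ into the warped-product form $ds^2+f(s)^2g_*$ with $f'>0$, so that (ii) and (iii) of that proposition hold; hypothesis (iii) there, namely $\mhh(\S_1)\geq -u_{m,b}(Ak)^3$, reduces via \eqref{eq-HawkingHpos} to an inequality that is guaranteed for $|m|$ large and $A$ near $A_o$ once the hypothesis \eqref{eq-cond-thm2}, $\frac{H_o^2 r_o^2}{4}<1+3r_o^2$, is imposed (this is the condition that the initial hyperbolic Hawking mass is not too negative, and it is exactly the analogue of the restriction appearing in the $b=0$ case). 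Proposition \ref{prop-gluing2} then produces, for any $m_e>\mhh(\S_1)$, an asymptotically hyperbolic extension with $R\geq -6$ coinciding with spatial AdS-Schwarzschild of mass $m_e$ outside a compact set and foliated by mean convex CMC spheres; since $\mhh(\S_1)$ can be made as close as we like to $\zeta\mhh(\S,g_o,H_o)+(\zeta-1)\frac{H_o^2r_o^3}{8}+\zeta(\zeta^2-1)\frac{r_o^3}{2}=m_*$ by choosing $\veps$ small, $A$ near $A_o$, and $|m|$ large, any $m>m_*$ is achievable, giving the theorem. The main obstacle I anticipate is bookkeeping the interdependence of the parameters: $b$, $k$, and $A_o$ all depend on $m$ (and on $\veps$, $\delta$), and one must check that the smallness of $k^2$, $k^2b=\delta$, and the error terms can be arranged \emph{simultaneously} with $|m|$ large and $A$ close to $A_o$ without any circularity — this is handled by the uniform limits in \eqref{extraidentity} and \eqref{umb-eps-est}, which decouple the estimates, but it requires care to present cleanly.
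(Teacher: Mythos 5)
Your proposal reproduces the paper's own argument for the case $b>0$: the same collar metric \eqref{eqCollar} with $k$ fixed by \eqref{eq-kmHpos}, the same coupling $k^2 b = \delta$ yielding \eqref{eq-bdefn} and the limits \eqref{extraidentity}, the same scalar-curvature estimate via \eqref{estimateScal-mb} split on the sign of $\beta$ with $A_o$ from \eqref{Ao-mb}, the same Hawking-mass bound at $\S_1$ using \eqref{umb-eps-est}, and the same appeal to Proposition \ref{prop-gluing2} with hypothesis (iii) reduced to \eqref{eq-cond-thm2}. One small correction: in this case $A_o$ from \eqref{Ao-mb} depends only on $\alpha,\beta,r_o$ and not on $m$ (unlike the $b=0$ case, \eqref{eq-A0}), so the parameter bookkeeping is a little less entangled than you suggest.
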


Note that \eqref{eq-cond-thm2} is equivalent to 
\begin{align*} 
\mhh(\S,g_o,H_o)>-\(\frac{|\S|_{g_o}}{4\pi} \)^{\frac32}.
\end{align*}

As before, this leads us to another estimate for the hyperbolic Bartnik mass that is valid even for Bartnik data that is very far from round.

\begin{coro}\label{coro-main2}
	Let $(\S\cong\bS^2,g_o,H_o)$ be Bartnik data as in Theorem \ref{thm-extension2}. The hyperbolic Bartnik mass of  $(\S\cong\bS^2,g_o,H_o)$  satisfies
	\begin{align*}
	\m_B^{AH}(\S,g_o,H_o)\leq \zeta\mhh(\S,g_o,H_o)+(\zeta-1)\frac{H_o^2r_o^3}{8}+ \zeta\( \zeta^2 - 1 \)\dfrac{r_o^3}{2} ,
	\end{align*}
where $\zeta$ is defined as in Theorem \ref{thm-extension2}.
\end{coro}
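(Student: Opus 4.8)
The plan is to read off Corollary \ref{coro-main2} directly from Theorem \ref{thm-extension2} together with the definition of $\m_B^{AH}$ given in Section \ref{SS-mass}. First I would fix an arbitrary real number $m > m_*$, with $m_*$ and $\zeta$ as in Theorem \ref{thm-extension2}, and let $(M,\gamma)$ be the asymptotically hyperbolic Riemannian $3$-manifold that the theorem produces. By construction $(M,\gamma)$ has scalar curvature $R(\gamma)\geq-6$, its boundary $\partial M$ is isometric to $(\S,g_o)$ with induced constant mean curvature $H_o>0$, and outside a compact set it coincides with the spatial AdS-Schwarzschild manifold of mass $m$; since the latter is asymptotically hyperbolic of mass $m$, the manifold $(M,\gamma)$ is asymptotically hyperbolic with total hyperbolic mass $m(M,\gamma)=m$.

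The one point that requires an argument is that $(M,\gamma)$ contains no closed minimal surface enclosing $\partial M$, so that $(M,\gamma)\in\mathcal{A}(\S,g_o,H_o)$. For this I would use property (iii) of Theorem \ref{thm-extension2}: $M$ is foliated by mean convex spheres $\{\S_t\}$, each of strictly positive mean curvature with respect to the outward normal and eventually coinciding with the centred coordinate spheres of the AdS-Schwarzschild exterior; moreover, since $H_o>0$, the boundary leaf $\S_0=\partial M$ is itself strictly mean convex. If $S\subset M$ were a closed embedded minimal surface enclosing (and distinct from) $\partial M$, then, sliding the leaves $\S_t$ inward from a large value of $t$ at which $\S_t$ lies entirely outside $S$, there would be a first leaf touching $S$; at the point of contact the strong maximum principle for the mean curvature operator would force $S$ to coincide with that leaf, contradicting its strict mean convexity. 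Hence no such $S$ exists, and $(M,\gamma)$ is an admissible extension of $(\S,g_o,H_o)$.

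Consequently $\m_B^{AH}(\S,g_o,H_o)\leq m(M,\gamma)=m$ for every $m>m_*$, and taking the infimum over all such $m$ yields
\begin{align*}
\m_B^{AH}(\S,g_o,H_o)\leq m_*=\zeta\mhh(\S,g_o,H_o)+(\zeta-1)\frac{H_o^2r_o^3}{8}+ \zeta\( \zeta^2 - 1 \)\dfrac{r_o^3}{2},
\end{align*}
with $\zeta$ as in Theorem \ref{thm-extension2}, which is exactly the claimed estimate. The genuinely substantive input — the collar construction and gluing, i.e. Theorem \ref{thm-extension2} itself — has already been carried out; the only step here needing more than one line is the mean-convex-foliation argument ruling out enclosing minimal surfaces, and I expect that to be the (mild) main obstacle, the remaining work being the routine passage to the infimum.
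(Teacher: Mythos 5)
Your proof is correct and follows essentially the same route as the paper, which simply observes that the manifolds from Theorem \ref{thm-extension2} are admissible extensions and hence their masses bound $\m_B^{AH}$ from above, with the infimum over $m>m_*$ giving the stated bound. The only difference is that you make explicit the maximum-principle argument (using the mean convex foliation from property (iii)) showing there is no enclosing minimal surface, a point the paper leaves as implicitly clear.
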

\newpage

\bibliographystyle{amsplain}
\bibliography{AH-V2}
\vfill
\end{document}